\newtheorem*{conj}{Conjecture}
\def \R{\mbox{${\mathbb R}$}}
\def \H{\mbox{${\mathbb H}$}}
\def \Q{\mbox{${\mathbb Q}$}}
\newcommand{\rank}{\text{rank}}
\newcommand{\Ima}{\text{Im}}
\titleformat{\subsection}[runin]
{\bfseries} {\thesubsection{.}}{0.15cm}{}[.]
\titleformat{\subsubsection}[runin]
{\em}{\thesubsubsection{.}}{0.15cm}{}[.]
\newtheorem{theorem}{Theorem}[section]
\newtheorem{lemma}[theorem]{Lemma}
\newtheorem{proposition}[theorem]{Proposition}
\newtheorem*{maintheorem}{Main Theorem}
\newtheorem{remark}[theorem]{Remark}
\theoremstyle{definition}
\numberwithin{equation}{section}
\numberwithin{figure}{section}
\begin{document}
    
\fancyhead[LO]{On minimal homogeneous submanifolds of \(\H^{n+2}\)}
\fancyhead[RE]{Felippe Guimar\~aes, Joeri Van der Veken}
\fancyhead[RO,LE]{\thepage}

\thispagestyle{empty}

\begin{center}
{\bf \LARGE On minimal homogeneous submanifolds of the hyperbolic space up to codimension two}
\vspace*{5mm}

\hspace{0.2cm} {\Large Felippe Guimar\~aes and Joeri Van der Veken} 
\end{center}


\begin{quote}
{\small
\noindent {\bf Abstract}\hspace*{0.1cm}
    We show that a minimal homogeneous submanifold $M^n$, $n\geq 5$, of a hyperbolic space up to codimension two is totally geodesic.
}
\\

{\small
\noindent {\textbf{Mathematics Subject Classification:}}\hspace*{0.1cm}
    53C42, 53C40, 53C30.
}
\\
{\small
\noindent {\textbf{Keywords:}}\hspace*{0.1cm}
    Isometric immersion, extrinsically homogeneous submanifolds, minimal submanifolds.
}

\end{quote}

\section{Introduction}
We say that a Riemannian manifold \( M^n \) is \textsl{homogeneous} if the Lie group \( \mathrm{Iso} (M^n) \) of all isometries of \( M^n \) acts transitively on \( M^n \). Let \( f: M^n \rightarrow \Q_c^{n+p} \) be an isometric immersion of a homogeneous \( n \)-dimensional Riemannian manifold \( M^n \) into a simply connected \( (n+p) \)-dimensional Riemannian manifold with constant sectional curvature \( c \). The latter is known as a space form. We say that the isometric immersion \( f \) is (isometrically) rigid if it is unique up to isometries of \( \Q_c^{n+p} \). In particular, rigid homogeneous submanifolds are orbits of an action of some subgroup of \( \mathrm{Iso} (\Q_c^{n+p}) \); such manifolds are called \textsl{extrinsically homogeneous} submanifolds.

The classification of homogeneous hypersurfaces of simply connected space forms was completed in the works \cite{NaganoTakahashiHomEuclidean,TakahashiCod1Hom,TakahashiCod1Hom3,TakagiTakahashiHomSphere}. An alternative way to obtain such a classification is to study when the hypersurface is extrinsically homogeneous, and this is established by studying its rigidity using \cite{Killing}. More precisely, if the rank of the second fundamental form of the isometric immersion is greater than two, then it is rigid, and if the submanifold is homogeneous, then it will be an isoparametric hypersurface, i.e., the parallel equidistant hypersurfaces have constant mean curvature. In this case, the result follows from the fact that the homogeneous isoparametric hypersurfaces of simply connected space forms were classified in \cite{Somigliana,Segre,CartanIsoHyp,HsiangLawsonMinimalHomogeneous}. The cases in which the hypersurface is not rigid (a priori) are those in which the second fundamental form is highly degenerate and are treated separately.

The codimension two case was considered in \cite{spaceFormsNoronhadeCastro,euclideanNoronhadeCastro} under an additional hypothesis regarding the rank of the second fundamental form. A hypotheses is required to use results on isometric rigidity in \cite{doCarmoDajczerRigidity} (see also \cite{base}). In particular, they classified compact homogeneous submanifolds of codimension two.

Concerning extrinsically homogeneous submanifolds, there are interesting results that do not depend on the codimension. In \cite{homHypdiScalaOlmos,homMinEuclideanDiScala}, the authors proved that the only minimal extrinsically homogeneous submanifolds of Euclidean space and of hyperbolic space are the totally geodesic ones. In \cite{DiScalaKahlerMinimal}, the author proposed the following conjecture. 

\begin{conj}[\cite{DiScalaKahlerMinimal}]
Let \( M^n \) be a Riemannian manifold that is either locally homogeneous or Einstein. Then any minimal isometric immersion \( f: M^n \rightarrow \Q_c^{n+p} \), \( c \leq 0 \), must be totally geodesic.
\end{conj}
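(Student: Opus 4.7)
The plan is to address the conjecture's two branches (locally homogeneous and Einstein) separately while handling the two ambient geometries $c=0$ and $c<0$ in parallel, and to quarantine the principal difficulty in the low-rank regime of the second fundamental form.

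For the Einstein branch, I would begin with the observation that if $f$ is minimal and $M^n$ is Einstein with $\operatorname{Ric}=\rho\,g$, the Gauss equation produces the algebraic identity
\begin{equation*}
\sum_{\alpha=1}^{p} A_\alpha^{\,2} \;=\; \bigl((n-1)c - \rho\bigr)\,I,
\end{equation*}
where $I$ is the identity and the $A_\alpha$ are the shape operators in a local orthonormal normal frame. In particular $|A|^2 = n\bigl((n-1)c-\rho\bigr)$ is a non-negative constant, forcing $\rho\le (n-1)c\le 0$. Feeding this into Simons' identity
\begin{equation*}
\tfrac{1}{2}\Delta|A|^2 \;=\; |\nabla A|^2 + nc|A|^2 - \sum_{\alpha,\beta}\|[A_\alpha,A_\beta]\|^2 - \sum_{\alpha,\beta}\bigl(\operatorname{tr}(A_\alpha A_\beta)\bigr)^{2},
\end{equation*}
the left-hand side vanishes, giving an exact identity for $|\nabla A|^2$ in terms of algebraic invariants of $A$. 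The strategy is to exploit the Clifford-type constraint above, together with $\operatorname{tr}A_\alpha=0$, to extract further algebraic/differential relations (for instance by differentiating $\sum A_\alpha^{2}=\mathrm{const}\cdot I$ and coupling with the Codazzi equation), aiming to force $\nabla A=0$. Once $f$ is parallel, the classification of parallel submanifolds of non-positively curved space forms (Ferus for $c=0$; Backes--Reckziegel for $c<0$) rules out any non-trivial minimal example.

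For the locally homogeneous branch, the plan is to promote local homogeneity to extrinsic homogeneity and invoke \cite{homHypdiScalaOlmos,homMinEuclideanDiScala}. Since $M^n$ is locally homogeneous and $f$ is minimal, every scalar invariant of the second fundamental form is constant; in particular, the pointwise rank of the second fundamental form is a constant integer. If this rank is sufficiently large, classical isometric rigidity (Killing, do Carmo--Dajczer \cite{doCarmoDajczerRigidity,base}) extends local isometries of $M^n$ to ambient isometries of $\Q_c^{n+p}$, upgrading $f(M^n)$ to an extrinsically homogeneous submanifold, and the results of \cite{homHypdiScalaOlmos,homMinEuclideanDiScala} close the case for both $c=0$ and $c<0$.

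The hardest part, common to both branches, is the genuinely low-rank case. There, one should exploit the relative nullity distribution $\ker A$, whose leaves are totally geodesic both in $M^n$ and in $\Q_c^{n+p}$, and complete for $c\le 0$. The plan is an inductive splitting: factor off the nullity, obtain a minimal immersion of the conullity quotient of strictly smaller dimension, and show that the Einstein (respectively, locally homogeneous) property is inherited so that induction on $n$ closes the argument. The principal technical obstacle is precisely this inheritance step without additional hypotheses; the present paper circumvents it by restricting to codimension $\le 2$ with $n\ge 5$, where the explicit rigidity and classification results of \cite{spaceFormsNoronhadeCastro,euclideanNoronhadeCastro,base} make the degenerate stratum tractable. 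A resolution in full generality will, I expect, require essentially new input at this step.
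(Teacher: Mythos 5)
The statement you were asked about is not a theorem of this paper: it is an open conjecture (due to Di Scala) that the paper merely quotes, and the paper itself proves only the special case of the Main Theorem --- globally homogeneous $M^n$, codimension two, hyperbolic ambient space, $n\geq 5$. Your submission is, accordingly, a research programme rather than a proof, and you say as much in your final sentence. That honesty is appropriate, but it means there is no complete argument to certify; what can be done is to point at where the programme breaks.

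The concrete gaps are these. (i) In the Einstein branch, the Gauss-equation identity $\sum_\alpha A_\alpha^2=((n-1)c-\rho)I$ and the constancy of $|A|^2$ are correct, but the Simons identity then gives $|\nabla A|^2 = -nc|A|^2+\sum_{\alpha,\beta}\|[A_\alpha,A_\beta]\|^2+\sum_{\alpha,\beta}(\operatorname{tr}(A_\alpha A_\beta))^2$, and for $c\leq 0$ \emph{every} term on the right is non-negative, so the identity is consistent with $\nabla A\neq 0$ and yields no contradiction; this is precisely why the Einstein case is only known in codimension two \cite{MatsuyamaEinsteinMinCod2}. (ii) In the locally homogeneous branch, rigidity in codimension $p>2$ is not governed by the rank of the second fundamental form alone but by the $s$-nullity conditions of \cite{doCarmoDajczerRigidity}, so ``rank sufficiently large $\Rightarrow$ rigid'' is not available in the generality you invoke it; moreover local homogeneity does not by itself let local isometries act on $f(M^n)$, whereas Theorem~\ref{cor:homScalaOlmos} needs genuine extrinsic homogeneity. (iii) The inductive splitting along the relative nullity foliation is exactly the step you concede is unresolved; note that the paper does not perform such an induction even in its special case --- instead it uses the Riccati equation for the splitting tensor (Proposition~\ref{prop:eqn-c}) to show that nontrivial relative nullity forces $\|\alpha\|\equiv 0$ outright (Proposition~\ref{prop:scalBound}), and then handles $\nu_0=0$ by the codimension-two structure theory of \cite{spaceFormsNoronhadeCastro}. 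If you want to contribute to the conjecture rather than restate its difficulty, the nullity-plus-Riccati mechanism of Proposition~\ref{prop:scalBound}, which works in arbitrary codimension for $c<0$, is the piece of your plan that can actually be carried out.
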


The original statement did not include the hyperbolic case \( c < 0 \). In \cite{MatsuyamaEinsteinMinCod2} (also discussed in \cite{GuilhermeEinsteinMinCod2}), the conjecture was confirmed for Einstein submanifolds of codimension two. When the submanifold is a surface with constant Gaussian curvature, the conjecture follows from \cite[Theorem 4.2]{BryantCMCsufraces}. However, the scenario where the submanifold is homogeneous, with \( n \neq 2 \) and in codimension two, remains open. This work addresses the conjecture in the context of hyperbolic ambient spaces. If we denote by $\H^{n+2}$ the hyperbolic space of dimension $n+2$ and constant sectional curvature $-1$, the main result is the following.

\begin{maintheorem}\label{thm:mainTheo}
Let \( f: M^n \rightarrow \H^{n+2} \), \( n \geq 5 \), be a minimal isometric immersion of a homogeneous manifold \( M^n \). Then \( f \) is totally geodesic.
\end{maintheorem}

In order to prove the stated theorem, we will first show that the homogeneous submanifold cannot have a positive index of relative nullity (dimension of the kernel of the second fundamental form) everywhere. We will use the splitting tensor (also known as the conullity operator), a tool previously used in other works to identify constraints on the existence of complete submanifolds with a positive index of relative nullity (cf.  \cite{trabBoys,RosenthalSplittingTensor,base}). Once we have established that the second fundamental form is non-degenerate, the approach from \cite{spaceFormsNoronhadeCastro} will be applied to transform the problem into an algebraic one, focusing on verifying when the fundamental equations of the submanifold are satisfied.

The paper is organized as follows. In Section~\ref{sec:Pre} we provide definitions and results that will be used throughout the work. In Section~\ref{sec:Proof} we give the proof of the Main Theorem.


\section{Preliminaries}\label{sec:Pre}
Let \( f: M^n \rightarrow \Q^{n+p}_c \) be an isometric immersion of a Riemannian manifold. We denote by \( \alpha(x) \), the second fundamental form, and by \( A_\xi(x) \), the shape operator of \( f \) with respect to a normal vector \( \xi \in T^\perp_f M(x) \) at \( x \in M^n \). We say that $f$ is \emph{isometrically rigid}, or simply \emph{rigid}, if any other isometric immersion $g: M^n \rightarrow \Q^{n+p}_c$ is congruent to it by an isometry of the ambient space $\Q^{n+p}_c$. That is, there exists an isometry $\mathcal{I} \in \mathrm{Iso} \left( \Q^{n+p}_c\right)$ such that $g = \mathcal{I} \circ f$.

The \emph{relative nullity subspace} $\Delta\left(x\right) \subset T_xM$ of $f$ at $x \in M^n$ is the kernel of its second fundamental form $\alpha$ at $x \in M^n$, specifically,
\begin{equation*}
    \Delta \left(x\right)=\left\{X\in T_xM:\alpha\left(X,Y\right)=0\text{ for all }Y\in T_xM\right\}.
\end{equation*}
The dimension of this subspace, $\nu(x) := \dim \Delta(x)$, is referred to as the \emph{index of relative nullity} at $x \in M^n$. Define $\nu_0 := \min_{x \in M} \nu(x)$ as the minimum value of the relative nullity. The isometric immersion $f$ is considered \emph{totally geodesic} at $x \in M^n$ if $\nu(x) = n$, and $f$ is a \emph{totally geodesic} isometric immersion if $\nu \equiv n$ throughout $M^n$.

For a given $c \in \mathbb{R}$, the \emph{$c$-nullity distribution} of $M^n$ is the variable rank (intrinsic) distribution $\Gamma_c$ on $M^n$, defined at each $x \in M$ by
\[ 
\Gamma_c(x) = \left\{ Z \in T_xM : R(X, Y)Z = c\left( \langle Y, Z \rangle X - \langle X, Z \rangle Y \right) \text{ for all } X, Y \in T_xM \right\}.
\]
The \emph{index of $c$-nullity} at $x \in M^n$ is denoted by $\mu_c(x) := \dim \Gamma_c(x)$. Here we consider the curvature tensor $R$ of the Levi-Civit\`a connection $\nabla$, adhering to the sign convention 
\[
R(X,Y)Z=\nabla_X\nabla_YZ-\nabla_Y\nabla_XZ-\nabla_{[X,Y]}Z,
\]
for vector fields $X$, $Y$, $Z\in\Gamma(TM)$.

It is known that $\Delta$ and $\Gamma_c$ are autoparallel distributions on any open set where their dimensions are locally constant. Since $\nu$ is a lower semicontinuous function, the set $U = \{x \in M^n : \nu(x) = \nu_0\}$ is open and, in case $M^n$ is a complete Riemannian manifold, the leaves of the relative nullity in $U$ are complete totally geodesic submanifolds of constant curvature $c$ (cf.  \cite{FerusTotGeoFoliations,GrayConullity,MaltzNullity}).

Note that $\Delta(x) \subset \Gamma_{c}(x)$ and $\nu(x) \leq \mu_{c}(x)$ by the Gauss equation. For minimal submanifolds of a simply connected space form $\Q^{n+p}_c$, we have the equality.

\begin{lemma}\label{lem:minEq}
Let $f: M^n \rightarrow \Q^{n+p}_c$ be a minimal isometric immersion. Then $\Gamma_c(x) = \Delta(x)$ for all $x \in M^n$.
\end{lemma}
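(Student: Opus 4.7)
The inclusion $\Delta(x) \subset \Gamma_c(x)$ is immediate from the Gauss equation (the curvature tensor contains $\alpha$-terms only through shape operators applied to the argument), so the real content is the reverse inclusion. My plan is to fix $Z \in \Gamma_c(x)$ and extract from the Gauss equation a pointwise algebraic identity on the second fundamental form, then exploit minimality to collapse it to a sum of squares.

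Concretely, for $Z \in \Gamma_c(x)$ the Gauss equation gives
\[
\langle \alpha(X,W),\alpha(Y,Z)\rangle - \langle \alpha(X,Z),\alpha(Y,W)\rangle = 0
\]
for all $X,Y,W \in T_xM$. I would set $Y = W = e_i$ where $\{e_i\}$ is an orthonormal basis of $T_xM$, and sum over $i$. Picking an orthonormal frame $\{\xi_\alpha\}$ of $T^\perp_fM(x)$, the first sum rewrites as $\sum_\alpha \langle A_{\xi_\alpha}^2 X, Z \rangle$ by completeness, while the second sum becomes $\langle \alpha(X,Z), \sum_i \alpha(e_i,e_i)\rangle = n\langle \alpha(X,Z), H\rangle$, where $H$ is the mean curvature vector.

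This is where minimality enters: $H \equiv 0$ kills the second term entirely, leaving
\[
\sum_\alpha \langle A_{\xi_\alpha}^2 Z, X\rangle = 0
\]
for every $X$. Hence $\sum_\alpha A_{\xi_\alpha}^2 Z = 0$, and taking the inner product with $Z$ yields $\sum_\alpha \|A_{\xi_\alpha} Z\|^2 = 0$. Therefore $A_{\xi_\alpha} Z = 0$ for each $\alpha$, which is exactly the statement $Z \in \Delta(x)$.

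I do not expect any serious obstacle: the argument is a pointwise linear-algebraic manipulation of the Gauss equation, and the only nontrivial input is $\mathrm{tr}\,\alpha = 0$. The one small thing to check carefully is the index bookkeeping when reorganizing $\sum_{i,\alpha} \langle A_{\xi_\alpha} X, e_i\rangle \langle A_{\xi_\alpha} Z, e_i\rangle$ into $\sum_\alpha \langle A_{\xi_\alpha} X, A_{\xi_\alpha} Z\rangle$, which uses the symmetry of each $A_{\xi_\alpha}$ together with Parseval's identity in the basis $\{e_i\}$.
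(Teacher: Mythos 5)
Your proof is correct and follows essentially the same route as the paper's: both contract the Gauss equation applied to a vector in $\Gamma_c(x)$ against an orthonormal basis, use minimality to annihilate the mean-curvature term, and conclude from the resulting vanishing sum of squares $\sum_\alpha \Vert A_{\xi_\alpha} Z\Vert^2 = 0$ that $Z \in \Delta(x)$. The only cosmetic difference is that you trace the $Y=W$ slots to first obtain the operator identity $\sum_\alpha A_{\xi_\alpha}^2 Z = 0$, whereas the paper specializes to $\langle \alpha(Y,Y),\alpha(X,X)\rangle = \Vert\alpha(X,Y)\Vert^2$ and traces over $X$ directly.
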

\begin{proof}
Let $x\in M^n$. The inclusion $\Delta(x) \subset \Gamma_c(x)$ follows in general from the Gauss equation as stated above. We now prove the other inclusion using minimality. Let $Y \in \Gamma_c(x)$. It follows from the Gauss equation that
\[
0 = \langle \alpha(Y,Y), \alpha(X,X)\rangle - \Vert \alpha(X,Y) \Vert^2
\]
for all $X \in T_xM$. Summing over an orthonormal basis $\{X_i\}_{1 \leq i \leq n}$ of $T_xM$ and using the minimality of the isometric immersion, we have $\sum_{1 \leq i \leq n}\Vert \alpha(X_i,Y) \Vert^2 = 0$ and, in particular, $Y \in \Delta(x)$.
\end{proof}

We say that $M^n$ is an \emph{extrinsically homogeneous submanifold} of $\Q^{n+p}_c$ if for any two points $x,y \in M^n$ there exists an isometry $\mathcal{I} \in \mathrm{Iso} (\Q^{n+p}_c)$ such that $\mathcal{I}(M) = M$ and $\mathcal{I}(x)=y$. We can now state an important result that will play a key role in the proof of our result.

\begin{theorem}[Corollary 1.4 in \cite{homHypdiScalaOlmos}]\label{cor:homScalaOlmos}
A minimal extrinsically homogeneous submanifold of hyperbolic space must be totally geodesic.
\end{theorem}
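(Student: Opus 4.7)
My plan is to work in the Lorentzian hyperboloid model $\H^{n+p} \subset \L^{n+p+1}$. Since $\H^{n+p}$ is totally umbilical in $\L^{n+p+1}$ with the position vector as unit timelike normal, a standard computation shows that for any isometric immersion $f\co M^n \to \H^{n+p}$, viewed as an $\L^{n+p+1}$-valued map, the position vector satisfies
\[
\Delta f = n\, H + n f,
\]
where $H$ is the mean curvature vector of $M$ in $\H^{n+p}$ and $\Delta$ is the Laplacian on $M$. When $f$ is minimal this reduces to $\Delta f = n f$, so every linear coordinate $\phi_v = \lla f, v \rra$, for $v \in \L^{n+p+1}$, is a Laplace eigenfunction on $M$ with positive eigenvalue~$n$.

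I would then exploit extrinsic homogeneity: a closed Lie subgroup $G \subset \mathrm{Iso}(\H^{n+p}) \cong O^+(n+p,1)$ acts transitively on $M$. The key step is to split into cases according to the causal type of a minimal $G$-invariant affine subspace of $\L^{n+p+1}$ (an Iwasawa-style trichotomy): $G$ fixes a spacelike direction, a timelike direction, or only a null direction. In the spacelike case $M$ lies inside a totally geodesic $\H^{n+p-1}$ and one inducts on the codimension. In the timelike case $M$ lies on a geodesic distance sphere, which is intrinsically a round Euclidean sphere, so that minimality in $\H^{n+p}$ combined with the umbilical shift forces a specific constant mean curvature in the sphere; standard classifications of extrinsically homogeneous submanifolds of round spheres then rule this out. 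In the null case $M$ lies on a horosphere, which is intrinsically flat, and the result reduces to the known Euclidean analogue asserting that minimal extrinsically homogeneous submanifolds of $\R^N$ are totally geodesic.

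The main obstacle I expect is the horospherical case, since a horosphere is totally umbilical but not totally geodesic in $\H^{n+p}$: minimality of $M$ in $\H^{n+p}$ does not coincide with minimality of $M$ in the induced flat horosphere metric, differing by a fixed constant umbilical contribution, so transferring the Euclidean statement back requires careful bookkeeping of that shift. A secondary but essentially Lie-theoretic point is to verify that the causal trichotomy really exhausts the possibilities for a closed subgroup of $O^+(n+p,1)$ acting transitively on a submanifold, which reduces to a structural analysis of its fixed-vector set in $\L^{n+p+1}$.
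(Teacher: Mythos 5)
First, note that the paper does not prove this statement at all: it is imported verbatim as Corollary~1.4 of the cited Di Scala--Olmos paper, so the only honest comparison is with that reference, whose proof is precisely a structure theorem for connected Lie subgroups of $\mathrm{Iso}(\H^{n+p})$. Your sketch is in the same spirit, but the step you dismiss as ``secondary and essentially Lie-theoretic'' --- the exhaustiveness of the causal trichotomy --- is in fact the entire content of the result, and as you state it it is false. A closed subgroup $G\subset O^+(n+p,1)$ acting transitively on a submanifold need not fix any nonzero \emph{vector} of $\L^{n+p+1}$ (so the three cases are not exhaustive), and fixing a null \emph{direction} is not enough to conclude that orbits lie in horospheres: a loxodromic screw motion group in $\H^3$ (translation along a geodesic composed with rotation about it) fixes two null directions, scaling them by a character, and its orbits are helices, contained in no horosphere, distance sphere, or totally geodesic subspace. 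Only an honestly \emph{fixed} null or timelike vector makes the coordinate function $\langle f,v\rangle$ constant on $M$ and traps $M$ in a horosphere or distance sphere. The genuinely hard case --- $G$ fixes no point of $\H^{n+p}\cup\partial_\infty\H^{n+p}$ and $M$ lies in no invariant umbilical or totally geodesic hypersurface --- is exactly what Di Scala and Olmos resolve (via the existence of a canonical minimal/totally geodesic orbit for such groups), and your outline does not address it.

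Two smaller points. The cases you do reach are easier than you make them: if $M$ lies in an umbilical hypersurface $\Sigma$ of $\H^{n+p}$ (distance sphere, horosphere, or equidistant hypersurface), then the mean curvature vector of $M$ in $\H^{n+p}$ decomposes orthogonally as the mean curvature of $M$ in $\Sigma$ plus the constant umbilical term $k\nu$ with $k\in\{\coth r,\,1,\,\tanh d\}$ all nonzero; since the two components are mutually orthogonal, minimality of $M$ in $\H^{n+p}$ is immediately impossible --- no classification of homogeneous submanifolds of spheres and no ``bookkeeping'' of the horospherical shift is needed, because those cases simply cannot occur. Finally, the eigenfunction identity $\Delta f=nf$ is correct but is never actually used in your argument; by itself it only rules out \emph{compact} minimal submanifolds (integrate $\langle f,v\rangle\Delta\langle f,v\rangle$), and homogeneous submanifolds of $\H^{n+p}$ need not be compact.
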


It follows from the definition of isometric rigidity that a rigid homogeneous submanifold in $\Q_c^{n+p}$ is an extrinsically homogeneous submanifold. Thus, a natural approach to the proof of the Main Theorem is to study the cases when the submanifold is not rigid in the following result.

\begin{theorem}[Theorem 1 in \cite{spaceFormsNoronhadeCastro}]\label{thm:HypRank2}
Let $f: M^n \rightarrow \Q_c^{n+2}$, $c \in \R$ and $n \geq 5$, be an isometric immersion of a Riemannian homogeneous manifold such that $\nu_0 \leq n-5$. Then either $f$ is rigid, or for every point $x \in M^n$ there exist orthonormal vectors $\xi, \eta \in T^\perp_f M(x)$ such that $\rank\ A_\eta \leq 2$ and $i_* \circ A_\xi = A_\xi \circ i_*$ for all $i \in \mathrm{Iso} (M^n)$.

\end{theorem}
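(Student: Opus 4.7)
The plan is to deduce the statement from the isometric rigidity criterion for codimension two immersions developed by do~Carmo and Dajczer in \cite{doCarmoDajczerRigidity}. That criterion guarantees rigidity of $f: M^n \rightarrow \Q_c^{n+2}$ provided appropriate control on the \emph{$s$-nullities} of $\alpha$, namely the maximal dimension of a subspace $V \subset T^\perp_f M(x)$ such that $\dim \ker A_\xi \geq s$ for every $\xi \in V$. Under the hypothesis $\nu_0 \leq n-5$, combined with the homogeneity of $M^n$ (which forces $\nu$ to be constant and the type number to be the same at every point), the second fundamental form has rank at least five everywhere, so the only possible failure of the criterion is an exceptional degeneracy concentrated in a single normal direction.

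First, I would translate the failure of rigidity into a pointwise geometric statement. If $f$ is not rigid, the do~Carmo--Dajczer criterion must fail at some point, and in codimension two this failure admits essentially only one form: the existence of a unit normal vector $\eta \in T^\perp_f M(x)$ with $\rank A_\eta \leq 2$. The homogeneity of $M^n$ then propagates this property from a single point to every point, since the isometries of $M^n$ lift (via an induced bundle isomorphism of the normal bundle) to maps that preserve the collection of shape operators.

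Next, I would exploit the non-rigidity to produce an isometric immersion $g: M^n \rightarrow \Q_c^{n+2}$ that is not congruent to $f$. Comparing the Gauss and Codazzi equations for the pair $(f,g)$ under a bundle isomorphism between their normal bundles, the degeneracy $\rank A_\eta \leq 2$ is precisely what permits a non-trivial deformation in the $\eta$-direction; in the complementary direction $\xi \perp \eta$, however, the shape operators for $f$ and $g$ must coincide under the natural identification. This yields an intrinsic characterization of $A_\xi$ depending only on the Riemannian metric of $M^n$ (up to a sign), independent of the particular isometric immersion.

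Finally, the commutativity $i_* \circ A_\xi = A_\xi \circ i_*$ for every $i \in \mathrm{Iso}(M^n)$ follows from this intrinsic characterization: pulling back $f$ by $i$ produces another isometric immersion whose distinguished shape operator must again agree with $A_\xi$, forcing the equivariance claimed. The main obstacle in this plan is to make the intrinsic characterization of $A_\xi$ precise and globally consistent; this requires a careful analysis of the combined Gauss--Codazzi system for $(f,g)$ under the low-rank hypothesis on $A_\eta$, and a verification that the direction $\xi$ can be chosen smoothly and in a way compatible with the group action on $M^n$.
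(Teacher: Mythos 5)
The paper does not prove this statement: it is imported verbatim as Theorem~1 of \cite{spaceFormsNoronhadeCastro} and used as a black box, so there is no internal proof to compare against. Your outline does follow the strategy that the cited reference (and the introduction of this paper) points to --- the do~Carmo--Dajczer rigidity criterion via $s$-nullities, identification of the failure mode in codimension two with a normal direction $\eta$ of low rank, and an ``intrinsic'' characterization of $A_\xi$ in the complementary direction yielding the equivariance. So the skeleton is the expected one.

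As a proof, however, it has concrete gaps beyond the one you acknowledge. First, your definition of the $s$-nullity is wrong: $\nu_s(x)$ is the maximum over $s$-dimensional subspaces $V\subset T^\perp_fM(x)$ of the dimension of the joint kernel $\{X: \pi_V\,\alpha(X,\cdot)=0\}$, not the maximal dimension of a $V$ all of whose shape operators have kernel of dimension at least $s$; negating the criterion with the correct definition, and showing that the only surviving degeneracy is a single direction $\eta$ with $\rank\ A_\eta\le 2$, requires the algebraic analysis (Cartan/Moore) of the difference $\alpha^f\oplus\alpha^g$ of the second fundamental forms of two non-congruent immersions --- it is not a formal consequence of ``the criterion fails at a point.'' Second, the propagation ``from a single point to every point'' via homogeneity is false as stated: $\nu$ and the shape operators are extrinsic, and intrinsic isometries of $M^n$ do not preserve them --- if they did, every homogeneous submanifold would be extrinsically homogeneous and the entire problem addressed by this paper would be vacuous. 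The everywhere-valid decomposition has to come from running the pointwise algebraic argument for the pair $(f,g)$ at each point, plus a separate treatment of the locus where the two second fundamental forms coincide. Third, the equivariance $i_*\circ A_\xi=A_\xi\circ i_*$ is the real content of the theorem, and it rests on the uniqueness (up to sign) of the distinguished direction $\xi$, which in turn needs the hypothesis $\nu_0\le n-5$; you explicitly defer exactly this step. In its present form the proposal is a correct road map, not a proof.
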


Below, we restate a result from \cite{spaceFormsNoronhadeCastro} indicating that, even when the submanifold is not rigid, it exhibits a special structure.

\begin{theorem}[Theorem 19 in \cite{spaceFormsNoronhadeCastro}]\label{thm:TotallyGeodesicProduct}
Let \( f: M^n \rightarrow \H^{n+2} \), \( n \geq 5 \), be an isometric immersion of a Riemannian homogeneous manifold with $\nu_0 \leq n-5$, such that for each \( x \in M^n \) there exists an orthonormal frame \(\{ \xi, \eta \}\) of the normal space with \( A_\xi \) invariant by isometries of \( M^n \) and \( \rank\ A_\eta = 2 \). Then one of the following occurs:
\begin{itemize}
    \item \( \ker A_\eta \) is an autoparallel distribution and \( M^n \) is a product \( \Sigma^2 \times \Q^{n-2}_c \) or \( \Sigma^3 \times \mathbb Q^{n-3}_c\), where in the latter case \( c < 0 \);
    \item \( \ker A_\eta \) is not an autoparallel distribution and \( M^n \) is a cohomogeneity one manifold such that all orbits are flat spaces.
\end{itemize}
\end{theorem}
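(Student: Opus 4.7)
The plan is to exploit the interplay between the algebraic rigidity of $A_\xi$ (invariant under $\mathrm{Iso}(M^n)$), the geometric constraints imposed by $\rank A_\eta = 2$, and the homogeneity of $M^n$, dichotomizing on the integrability of $\ker A_\eta$. First, I would use the invariance of $A_\xi$ together with transitivity of $\mathrm{Iso}(M^n)$ to conclude that $A_\xi$ has constant spectrum on $M^n$ and that its eigendistributions are smooth and $\mathrm{Iso}(M^n)$-invariant. Writing $\mathcal{D} := \ker A_\eta$ (of rank $n-2$) and $\Delta = \ker A_\xi \cap \mathcal{D}$ for the relative nullity (of dimension $\nu_0 \leq n-5$), the remaining geometric data is encoded in the action of $A_\xi$ on $\mathcal{D}$ and on its two-dimensional orthogonal complement $\mathcal{D}^\perp$, together with the single normal connection form $\omega := \langle \nabla^\perp \xi, \eta\rangle$.

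The next step is to extract restrictions from the Codazzi and Ricci equations. Because $\rank A_\eta = 2$, the Codazzi equation for $A_\eta$ splits into components that govern $\omega$ and the splitting tensor $C$ of $\mathcal{D}$ (which measures the failure of $\mathcal{D}$ to be autoparallel). Coupled with the Codazzi equation for $A_\xi$ and the Ricci equation, and using the invariance of $A_\xi$ and homogeneity of $M^n$, this produces a rigid linear-algebraic system for $C$ and $\omega$: either $C \equiv 0$ (the autoparallel case) or $C$ is nonvanishing but of very restricted form (the non-autoparallel case).

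In the autoparallel case, $\mathcal{D}$ integrates to a totally geodesic foliation of $M^n$ whose leaves have induced curvature controlled by $A_\xi|_\mathcal{D}$ via the Gauss equation. Homogeneity of $M^n$ makes the leaves intrinsically homogeneous, and the constancy of the spectrum of $A_\xi$ forces $A_\xi|_\mathcal{D}$ to have at most two distinct eigenvalues, one of which must have multiplicity at least $n-4$ using the bound $\nu_0 \leq n-5$. A de Rham type decomposition, applied after verifying that the orthogonal factor $\mathcal{D}^\perp$ is also autoparallel by a symmetric Codazzi argument, then produces a local splitting $M^n = \Sigma^k \times \Q^{n-k}_c$ with $k \in \{2,3\}$, where the second factor is an $(n-k)$-dimensional space form whose curvature is fixed by matching the Gauss equation between the two factors; tracking signs in this matching in the $k=3$ subcase imposes $c<0$.

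If $\mathcal{D}$ is not autoparallel, then $C$ is nonzero but controlled by essentially a single invariant by the earlier rigidity and homogeneity. I would show that the subdistribution of $TM$ spanned by $\Delta$ together with the images of $C$ integrates to a codimension-one foliation whose leaves are orbits of a one-parameter family of isometries in $\mathrm{Iso}(M^n)$, producing a cohomogeneity-one structure. Flatness of the principal orbits would then follow from the Gauss equation restricted to the orbit: $A_\eta$ vanishes on $\mathcal{D}$, $A_\xi$ acts on the orbit tangent space in a form tightly constrained by Codazzi and by the invariance hypothesis, and the ambient curvature $-1$ cancels with the remaining contribution from $A_\xi$. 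I expect the main technical obstacle to be precisely this last step, namely identifying the cohomogeneity-one action from the splitting tensor data and verifying the flatness of the orbits; the delicate algebraic analysis separating the $k=2$ and $k=3$ subcases in the autoparallel scenario is the secondary pinch point.
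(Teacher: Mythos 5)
This statement is not proved in the paper at all: it is imported verbatim as Theorem~19 of \cite{spaceFormsNoronhadeCastro} and used as a black box, so there is no internal proof to compare your attempt against. Judged on its own terms, your outline has two genuine gaps. First, in the autoparallel case you propose a de Rham splitting along $\ker A_\eta\oplus(\ker A_\eta)^\perp$, whose ranks are $n-2$ and $2$; this can only ever produce a product of the form $\Sigma^2\times N^{n-2}$, so the alternative $\Sigma^3\times\Q^{n-3}_c$ in the statement cannot arise from the decomposition you set up, and you give no mechanism by which a three-dimensional factor absorbing one direction of $\ker A_\eta$ would appear, nor why the complementary factor is a \emph{space form} (that requires showing $A_\xi$ restricted to the relevant leaf is umbilical, e.g.\ via the classification of homogeneous hypersurfaces applied to the leaves, not merely that it has constant spectrum; your claim that the spectrum forces at most two eigenvalues with one of multiplicity $\geq n-4$ does not follow from $\nu_0\leq n-5$ alone). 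Second, in the non-autoparallel case the entire content of the theorem is the construction of the cohomogeneity-one action and the flatness of the orbits, and you explicitly defer exactly these steps as ``the main technical obstacle'' rather than supplying an argument; asserting that the span of $\Delta$ and the image of the splitting tensor integrates to orbits of a one-parameter isometry group is a statement that itself needs the invariance and Codazzi analysis carried out in detail. As it stands the proposal is a plausible roadmap, not a proof.
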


Therefore, it only remains for us to study the case in which the submanifold has a minimum nullity index $\nu_0 > n-5$. In order to work with this case, we will introduce a tensor widely used to work with such a distribution. Consider the orthogonal splitting $TM=\Delta\oplus\Delta^\perp$. For a vector field $X\in\Gamma(TM)$, we shall write $X=X|_{\Delta}+X|_{\Delta^\perp}$. Now we can define the 
\emph{splitting tensor (or conullity operator)} of $\Delta$ as the map
\[ C:\Gamma(\Delta)\times\Gamma(\Delta^\perp)\to \Gamma(\Delta^\perp) \]
given by
\[ C_T X \coloneqq C(T,X)=-(\nabla_XT)|_{\Delta^\perp}.\]
It is clear that $C$ is $C^\infty(M)$-linear in each variable. Such a tensor was introduced in \cite{RosenthalSplittingTensor} and is discussed in more detail in \cite[\textsection 7.2]{base}. 

Let $\gamma: I \subset \mathbb{R} \rightarrow M^n$ be a geodesic inside a leaf of the nullity distribution $\Delta$, the splitting tensor associated with the distribution $\Delta$ satisfies a Riccati's type equation
\[ \frac{D}{dt}C_{\gamma'} = C_{\gamma'}^2 - I.\]
We can explicitly integrate the previous equation to get the following proposition (cf.  \cite{trabBoys}).

\begin{proposition}[Proposition 1 in \cite{trabBoys}]\label{prop:eqn-c}
Let $f: M^n \rightarrow \H^{n+p}$ be an isometric immersion and $\gamma:[0,b) \to M^n$ a nontrivial unit speed geodesic with $p=\gamma(0)$ and 
$\gamma'(0)\in\Delta(p)$ so that $\gamma$ is a geodesic of the 
leaf of $\Delta$ through~$p$. Assume that $\gamma([0,b))$
is contained in an open subset of~$M^n$ where $\nu\neq0$ is constant. 
Then the splitting tensor $C(t) \coloneqq C_{\gamma'(t)}$ of $\Delta$ at $\gamma(t)$ 
is given by 
\begin{equation}\label{eq:splitting}
	C(t)=-\mathcal{P}^t_0 \circ \left(J_0'(t) \circ J_0(t)^{-1} \right) \circ \left(\mathcal{P}^t_0\right)^{-1},
\end{equation}
where
\begin{equation}\label{eq:defJacobi}
J_0(t) =
(\cosh t) I - (\sinh t) C_0,
\end{equation} 
$C_0 \coloneqq C(0)$ and $\mathcal{P}^t_0$ is the parallel transport along $\gamma$. Moreover, the second fundamental form is given by
\begin{equation}\label{eq:secFund}
\alpha_{\gamma(t)} = \mathcal{P}^t_0 \circ \alpha_{\gamma(0)}\left(J_0(t)^{-1} \circ \left(\mathcal{P}^t_0\right)^{-1},\left(\mathcal{P}^t_0\right)^{-1}\right).
\end{equation}
\noindent In particular, if the geodesic is defined for all $\R$ then any real eigenvalue~$\lambda$ of $C_{\gamma'}$ satisfies $\vert\lambda\vert\leq 1$.
\end{proposition}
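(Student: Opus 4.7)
The plan is to solve the matrix Riccati equation \(\frac{D}{dt}C_{\gamma'}=C_{\gamma'}^2-I\) (which is the Riccati equation displayed just above the proposition, with constant \(c=-1\) from the ambient hyperbolic space) by the standard linearization trick \(C=-J'J^{-1}\). Parallel transport along \(\gamma\) is used to turn covariant derivatives into ordinary ones, reducing the problem to an ODE on the fixed vector space \(\Delta^{\perp}(p)\). The second fundamental form is then handled by a parallel companion argument based on the Codazzi equation.

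More concretely, I would first set \(\widetilde{C}(t):=(\mathcal{P}^t_0)^{-1}\circ C(t)\circ \mathcal{P}^t_0\), an endomorphism of \(\Delta^{\perp}(p)\) satisfying the ordinary ODE \(\widetilde{C}'=\widetilde{C}^2-I\) with \(\widetilde{C}(0)=C_0\). Seek \(\widetilde{J}(t)\) with \(\widetilde{J}(0)=I\) and \(\widetilde{J}'(0)=-C_0\) such that \(\widetilde{C}=-\widetilde{J}'\widetilde{J}^{-1}\). Substituting and using \(\widetilde{C}'=-\widetilde{J}''\widetilde{J}^{-1}+\widetilde{J}'\widetilde{J}^{-1}\widetilde{J}'\widetilde{J}^{-1}\) forces \(\widetilde{J}''=\widetilde{J}\); the unique solution with the prescribed initial data is \(\widetilde{J}(t)=(\cosh t)I-(\sinh t)C_0=J_0(t)\). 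Conjugating back by \(\mathcal{P}^t_0\) produces (2.1). Invertibility of \(J_0(t)\) on \([0,b)\) holds because the substitution is forced wherever \(C(t)\) is defined (the local solution of the Riccati equation exists on the same interval).

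For the second fundamental form, the key input is the Codazzi equation in a space form, \((\nabla_U\alpha)(V,W)=(\nabla_V\alpha)(U,W)\). Taking \(U=\gamma'\in\Delta\) and \(V,W\in\Delta^{\perp}\), and using \(\alpha(\gamma',\cdot)=0\) together with \((\nabla_V\gamma')|_{\Delta^{\perp}}=-C_{\gamma'}V\) (directly from the definition of the splitting tensor), a short calculation gives
\[
(\nabla^{\perp}_{\gamma'}\alpha)(V,W)=\alpha(C_{\gamma'}V,W).
\]
Setting \(\widetilde{\alpha}(t)(V,W):=(\mathcal{P}^t_0)^{-1}\alpha_{\gamma(t)}(\mathcal{P}^t_0V,\mathcal{P}^t_0W)\) and using that parallel transport kills the \(\mathcal{P}^t_0\)-derivatives, this becomes \(\widetilde{\alpha}'(t)(V,W)=\widetilde{\alpha}(t)(\widetilde{C}(t)V,W)=-\widetilde{\alpha}(t)(\widetilde{J}'\widetilde{J}^{-1}V,W)\). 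Defining \(\beta(t)(V,W):=\widetilde{\alpha}(t)(\widetilde{J}(t)V,W)\), one checks \(\beta'\equiv0\), so \(\beta(t)=\widetilde{\alpha}(0)(V,W)\). Rewriting and conjugating back by \(\mathcal{P}^t_0\) yields (2.2).

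The final assertion is then immediate: if \(\gamma\) extends to all of \(\mathbb{R}\), formula (2.1) must be valid for all \(t\in\mathbb{R}\), hence \(J_0(t)\) invertible everywhere. Should \(\lambda\in\mathbb{R}\) be a real eigenvalue of \(C_0\) with \(|\lambda|>1\), then \(\cosh t-\lambda\sinh t\) vanishes at \(t_*=\tfrac12\log\tfrac{\lambda+1}{\lambda-1}\) (or its negative), a contradiction. So \(|\lambda|\le 1\). The step I expect to need the most care is the Codazzi computation leading to \((\nabla^{\perp}_{\gamma'}\alpha)(V,W)=\alpha(C_{\gamma'}V,W)\): one must correctly split each \(\nabla\) into \(\Delta\) and \(\Delta^{\perp}\) components, recognise which terms vanish because a \(\Delta\)-slot kills \(\alpha\), and be sure to use the sign convention \(C_TX=-(\nabla_XT)|_{\Delta^{\perp}}\) rather than the ostensibly symmetric version.
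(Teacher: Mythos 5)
Your proof is correct and is essentially the argument the paper has in mind: the paper imports this result from \cite{trabBoys} with the remark that it follows by explicitly integrating the Riccati equation $\frac{D}{dt}C_{\gamma'}=C_{\gamma'}^2-I$, which is exactly your linearization $C=-J'J^{-1}$ after trivializing by parallel transport, together with the standard Codazzi computation $(\nabla^{\perp}_{\gamma'}\alpha)(V,W)=\alpha(C_{\gamma'}V,W)$ and the companion ODE for $\alpha$. The signs, initial conditions, and the concluding eigenvalue argument via the zero of $\cosh t-\lambda\sinh t$ all check out.
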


\section{Proof of the Main Theorem}\label{sec:Proof}
Let $f: M^n \to \mathbb H^{n+2}$ be a minimal isometric immersion of a homogeneous manifold into a hyperbolic space. We will consider two cases: first, we assume that the submanifold has nontrivial relative nullity, and then we assume that the index of relative nullity is zero.

\subsection{The submanifold has nontrivial relative nullity}
Minimal homogeneous submanifolds of a hyperbolic space for which the relative nullity is everywhere nontrivial must be totally geodesic, regardless of the codimension. In fact, we have the following more general result.

\begin{proposition}\label{prop:scalBound}
Let $f: M^n \rightarrow \H^{n+p}$ be an isometric immersion of a complete manifold with constant scalar curvature, mean curvature vector field of constant length, and whose relative nullity is everywhere nontrivial. Then $f$ is totally geodesic.
\end{proposition}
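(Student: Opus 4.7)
I argue by contradiction, supposing $1\le \nu_0<n$. The Gauss equation
\[
s=-n(n-1)+n^{2}\|H\|^{2}-\|\alpha\|^{2}
\]
together with the hypothesis that both $s$ and $\|H\|$ are constant implies that $\|\alpha\|^{2}$ is a constant function on $M^{n}$. On the nonempty open set $U=\{x\in M^{n} : \nu(x)=\nu_{0}\}$ the distribution $\Delta$ is smooth and autoparallel, and by completeness of $M^{n}$ its leaves are complete totally geodesic submanifolds of constant sectional curvature $-1$. The idea is to follow $\|\alpha\|^{2}$ along a unit-speed geodesic inside a leaf of $\Delta$ and use Proposition~\ref{prop:eqn-c} to force its decay at infinity, so that constancy leaves no option other than $\alpha\equiv 0$.

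Fix $p\in U$, a unit vector $T\in\Delta(p)$ and the geodesic $\gamma:\R\to U$ with $\gamma'(0)=T$ (defined on all of $\R$ by completeness of the leaf). Write $C_{0}:=C_{T}$ for the splitting tensor at $p$ acting on $\Delta^{\perp}(p)$, and $J_{0}(t)=\cosh(t)I-\sinh(t)C_{0}$. Formula~\eqref{eq:secFund}, together with the isometric character of parallel transport, yields
\[
\|\alpha(\gamma(t))\|^{2}=\sum_{a}\operatorname{tr}\bigl((A_{\xi_{a}}J_{0}(t)^{-1})^{2}\bigr),
\]
where $\{\xi_{a}\}$ is an orthonormal normal frame at $p$. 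The symmetry of $\alpha_{\gamma(t)}$ implies that each $A_{\xi_{a}}J_{0}(t)^{-1}$ is self-adjoint on $\Delta^{\perp}(p)$, so each summand equals its Hilbert--Schmidt norm squared. The key intermediate step is to rule out $\pm 1$ from the spectrum of $C_{0}$: if $C_{0}v=v$ for some nonzero $v\in\Delta^{\perp}(p)$, then $J_{0}(t)^{-1}v=e^{t}v$, and the elementary bound $\operatorname{tr}(S^{2})\ge \|Sv\|^{2}/\|v\|^{2}$ for a symmetric operator $S$ gives
\[
\|\alpha(\gamma(t))\|^{2}\;\ge\;\frac{e^{2t}}{\|v\|^{2}}\sum_{a}\|A_{\xi_{a}}v\|^{2}.
\]
Constancy of $\|\alpha\|^{2}$ forces $A_{\xi_{a}}v=0$ for every $a$, hence $v\in\Delta(p)\cap\Delta^{\perp}(p)=\{0\}$, a contradiction. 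The case $C_{0}v=-v$ is excluded by the same argument with $t\to -\infty$.

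With $\pm 1\notin\operatorname{spec}(C_{0})$, the operator $I-C_{0}$ is invertible, and the decomposition
\[
J_{0}(t)=\tfrac{e^{t}}{2}(I-C_{0})+\tfrac{e^{-t}}{2}(I+C_{0})
\]
yields $\|J_{0}(t)^{-1}\|=O(e^{-t})$ as $t\to +\infty$. Therefore $\|\alpha(\gamma(t))\|^{2}\to 0$, and constancy forces $\|\alpha\|\equiv 0$ on $M^{n}$; in particular $\Delta(p)=T_{p}M$, contradicting $\nu_{0}<n$. Hence $\nu_{0}=n$ and $f$ is totally geodesic. The delicate point is that $C_{0}$ need not be symmetric nor diagonalizable and may carry complex eigenvalues and Jordan blocks; however, the Rayleigh bound in the middle step only requires real eigenvectors for the (necessarily real) eigenvalues $\pm 1$, and the exponential decay of $J_{0}^{-1}$ in the final step is insensitive to the Jordan structure as soon as $1$ is absent from the spectrum.
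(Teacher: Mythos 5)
Your argument is correct and follows essentially the same route as the paper's proof: constancy of $\Vert\alpha\Vert^2$ from the Gauss equation, exclusion of the real eigenvalues $\pm1$ of the splitting tensor via the blow-up of $\alpha$ along a complete nullity geodesic, and then the exponential decay of $J_0(t)^{-1}$ forcing $\Vert\alpha\Vert^2\equiv 0$. Your explicit Hilbert--Schmidt/trace formulation and the remark on Jordan blocks only make precise steps the paper leaves implicit.
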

\begin{proof}
Let $x \in U \subset M^n$, where $U = \{x \in M^n: \nu(x) = \nu_0\}$ is the open subset where the index of relative nullity attains its minimum. By \cite{FerusTotGeoFoliations}, the leaves of relative nullity within \( U \) are complete. Let $\gamma: \mathbb{R} \rightarrow U \subset M^n$ be a complete geodesic such that $\gamma(0) = x$, $\gamma'(0) \in \Delta(\gamma(0))$, and $\Vert \gamma' \Vert = 1$. From Proposition~\ref{prop:eqn-c}, we have that $C(t) \coloneqq C_{\gamma'(t)} = -\mathcal{P}^t_0 \circ \left(J_0'(t)J_0(t)^{-1} \right) \circ \left(\mathcal{P}^t_0\right)^{-1}$, where $J_0(t) = \cosh t I - \sinh t C_0$ is invertible for all $t \in \mathbb{R}$.

If there is $\lambda \in \mathbb{R}$ such that $|\lambda| = 1$ and $C(0)X_0 = \lambda X_0$ for some unit vector $X_0 \in \Delta^\perp(\gamma(0))$, then $$J_0^{-1}(t)X_0 = \frac{2}{(1-\lambda)e^t+(1+\lambda)e^{-t}} X_0,$$ and $\Vert J_0^{-1}(t) X_0 \Vert$ is unbounded when $t$ goes to $\infty$ (if $\lambda=1$) or $-\infty$ (if $\lambda=-1$). Moreover, by \eqref{eq:secFund}, we have that $\Vert \alpha_{\gamma(t)} (\mathcal{P}^t_0 X_0, \mathcal{P}^t_0 Y_0) \Vert$ goes to infinity when $t$ goes to $\infty$ or $-\infty$ for some $Y_0 \in \Delta(\gamma(0))^{\perp}$ (otherwise $X_0 \in \Delta(\gamma(0))$). In particular, $\Vert \alpha \Vert^2$ is not bounded, and this is a contradiction with the expression of the scalar curvature $s$ given by the Gauss equation
\begin{equation}\label{eq:scalarCurvature}
    s = -1 + \frac{n}{n-1} \Vert \mathcal{H} \Vert^2 - \frac{1}{n(n-1)}\Vert \alpha \Vert^2. 
\end{equation}

We can now assume that the matrix $(I - C_0)$ is invertible. We have that $\left[J_0(t)\right]^{-1} = \frac{1}{\cosh t} \left( I - \tanh t C_0  \right)^{-1}$ exists for every $t\in \mathbb{R}$, and since $\left( I - C_0  \right)$ is invertible, it follows that $\left[J_0(t)\right]^{-1}$ converges to the zero matrix when $t$ goes to $\infty$. It follows from \eqref{eq:secFund} that $\Vert \alpha \Vert^2$ goes to zero along $\gamma$, using \eqref{eq:scalarCurvature} and the hypothesis that the scalar curvature $s$ and $\Vert \mathcal{H} \Vert$ are constants, we have that $\Vert \alpha \Vert^2$ must be zero everywhere, i.e., $f$ is totally geodesic.
\end{proof}

\subsection{The submanifold has trivial relative nullity}
In this case, we are under the assumptions of Theorem~\ref{thm:HypRank2}. If $f$ is isometrically rigid, then by Theorem~\ref{cor:homScalaOlmos}, the submanifold is totally geodesic. Thus, we can assume that $f$ is not rigid. For every point $x \in M^n$, there exist orthonormal vectors $\xi, \eta \in T^\perp_fM(x)$ such that $\rank\ A_\eta \leq 2$ and $A_\xi$ is invariant under $\mathrm{Iso} (M^n)$ by Theorem \ref{thm:HypRank2}. The next lemma establishes that, with the properties of homogeneity and minimality, $\rank\ A_\eta$ can be assumed to be constant equal to \(2\) everywhere.

\begin{lemma}\label{lem:Rank2}
    Let \( f: M^n \rightarrow \mathbb{H}^{n+2} \) be a minimal isometric immersion of a Riemannian homogeneous manifold. Assume that for each point $x\in M^n$, there exist orthonormal vectors $\xi, \eta \in T_f^\perp M(x)$ such that
    \begin{itemize}
        \item \( \rank\ A_\eta \leq 2 \), and
        \item for any isometry $i \in \mathrm{Iso} (M^n)$, the vector $\xi$ can be oriented such that $i_* \circ A_\xi = A_\xi \circ i_*$.
    \end{itemize}
    Under these conditions, \(\det (A_\eta|_{\text{Im} A_\eta})\) is constant, and either \( f \) is totally geodesic or \( \rank\ A_\eta = 2 \) everywhere. If the latter holds, then the distribution \( \ker A_\eta \) is invariant under the isometries of \( M^n \), involutive, and its leaves are homogeneous manifolds.

\end{lemma}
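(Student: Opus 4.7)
The plan is to combine the invariance of $A_\xi$ under $\mathrm{Iso}(M^n)$ with the minimality hypothesis and the Gauss equation. First I would observe that, by transitivity of the isometry action and the commutation $i_{*}\circ A_\xi = A_\xi\circ i_{*}$, the set of eigenvalues of $A_\xi$ (with multiplicities) is the same at every point of $M^n$; in particular $\|A_\xi\|^2 = \Tr(A_\xi^2)$ is constant. Minimality gives $\Tr A_\eta = 0$, which, combined with $\rank A_\eta \leq 2$, excludes $\rank A_\eta = 1$ (a trace-zero symmetric operator of rank one vanishes), so pointwise $\rank A_\eta\in\{0,2\}$. Homogeneity of $M^n$ forces the scalar curvature $s$ to be constant, so \eqref{eq:scalarCurvature} together with minimality makes $\|\alpha\|^2$ constant, and hence $\|A_\eta\|^2 = \|\alpha\|^2 - \|A_\xi\|^2$ is also constant. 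When $\rank A_\eta = 2$, the nonzero eigenvalues must be $\pm\lambda$, so $\|A_\eta\|^2 = 2\lambda^2$ and $\det(A_\eta|_{\Ima A_\eta}) = -\lambda^2$; constancy of $\|A_\eta\|^2$ then yields both the determinant assertion and the dichotomy $A_\eta\equiv 0$ on $M^n$ or $\rank A_\eta = 2$ everywhere.

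For the subcase $A_\eta\equiv 0$, I would show $f$ is totally geodesic. If $\nu_0>0$, Proposition~\ref{prop:scalBound} applies directly. If $\nu_0 = 0$, then $A_\eta=0$ forces $\rank A_\xi = n\geq 5$; the $\eta$-component of Codazzi, with $A_\eta\equiv 0$, reduces to $\omega(X)A_\xi Y = \omega(Y)A_\xi X$ for all $X,Y$, where $\omega(V) := \langle\nabla^\perp_V \xi,\eta\rangle$. Since $\rank A_\xi$ is too large to be constrained to a line, $\omega\equiv 0$, so $\xi$ is parallel in the normal bundle, and $f$ reduces to a codimension-one minimal isometric immersion into a totally geodesic $\H^{n+1}\subset\H^{n+2}$. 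Classical Beez--Killing rigidity makes this hypersurface rigid, hence extrinsically homogeneous, and Theorem~\ref{cor:homScalaOlmos} forces $f$ totally geodesic (the contradiction with $\rank A_\xi = n$ shows that this subcase is in fact empty).

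For the subcase $\rank A_\eta = 2$ everywhere, the invariance of $\ker A_\eta$ under $\mathrm{Iso}(M^n)$ is obtained from the Gauss equation: the intrinsic tensor $R$ is invariant, and so is the ``$A_\xi$-part'' by hypothesis, so the ``$A_\eta$-part''
\[ B_\eta(X,Y,Z,W) = \langle A_\eta X,W\rangle\langle A_\eta Y,Z\rangle - \langle A_\eta X,Z\rangle\langle A_\eta Y,W\rangle \]
is invariant. Contracting over $Y$ and $Z$ and using $\Tr A_\eta = 0$ yields $-A_\eta^2$, so $A_\eta^2$ is invariant, and hence $\ker A_\eta = \ker A_\eta^2$ is an invariant distribution. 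For involutivity, the $\eta$-component of Codazzi gives, for $X,Y\in\ker A_\eta$,
\[ A_\eta[X,Y] = \omega(X)A_\xi Y - \omega(Y)A_\xi X. \]
The rank-$2$ case forces $\nu_0 = 0$ (Proposition~\ref{prop:scalBound} would otherwise give totally geodesic, incompatible with $\rank A_\eta = 2$); if $\omega|_{\ker A_\eta}\not\equiv 0$ then $A_\xi(\ker A_\eta)$ would be at most one-dimensional, so $\dim(\ker A_\xi\cap\ker A_\eta) \geq n-3 \geq 2$, contradicting $\nu_0 = 0$. Hence $\omega|_{\ker A_\eta}\equiv 0$, and $\ker A_\eta$ is involutive. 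Finally, since $\ker A_\eta$ is invariant, any isometry of $M^n$ permutes its leaves; for $x,y$ on the same leaf $L$ any $i\in\mathrm{Iso}(M^n)$ with $i(x) = y$ must satisfy $i(L)=L$, so $i|_L$ is an isometry of $L$, and transitivity on each leaf follows from transitivity on $M^n$.

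The main obstacle I anticipate is the $A_\eta\equiv 0$ subcase with $\nu_0 = 0$: the Codazzi-based argument for the parallelism of $\xi$ is short, but chaining codimension reduction with hypersurface rigidity in order to invoke Theorem~\ref{cor:homScalaOlmos} is the only genuinely external input. The remaining steps (norm constancy, invariance of $\ker A_\eta$, involutivity, homogeneity of leaves) flow formulaically from the Gauss--Codazzi equations, the invariance of $A_\xi$, and transitivity of the isometry group.
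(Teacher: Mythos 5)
Most of your argument is sound and in places more explicit than the paper's: the constancy of $\det(A_\eta|_{\Ima A_\eta})$ and the dichotomy are obtained there, as by you, from the Gauss equation, the constancy of the scalar curvature and the invariance of $A_\xi$; the subcase $A_\eta\equiv 0$ is handled in the paper via flatness of the normal connection, parallelism of the first normal bundle, reduction to a totally geodesic $\mathbb{H}^{n+1}$ and Takahashi's classification of homogeneous hypersurfaces, and your alternative (Codazzi forces $\nabla^\perp\xi=0$, then Beez--Killing rigidity plus Theorem~\ref{cor:homScalaOlmos}) is a legitimate substitute; and the invariance of $\ker A_\eta$ is deduced from the Gauss equation exactly as you do, your contraction to $A_\eta^2$ being a clean way to make it precise. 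The difference is that for involutivity and the homogeneity of the leaves the paper simply invokes Lemma 7 of \cite{spaceFormsNoronhadeCastro}, whereas you attempt a direct proof, and that is where there is a genuine gap.

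Your Codazzi identity $A_\eta[X,Y]=\omega(X)A_\xi Y-\omega(Y)A_\xi X$ for $X,Y\in\ker A_\eta$ is correct, but its left-hand side is not zero a priori --- its vanishing is precisely the involutivity you are trying to prove --- and it takes values in the two-dimensional space $\Ima\, A_\eta$. Consequently, if $\omega(X_0)\neq 0$ for some $X_0\in\ker A_\eta$, all you may conclude is that
\[
A_\xi Y\in \Ima\, A_\eta+\operatorname{span}\{A_\xi X_0\}\qquad\text{for all }Y\in\ker A_\eta,
\]
so that $A_\xi(\ker A_\eta)$ lies in a space of dimension at most three, not at most one. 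This only gives $\dim(\ker A_\xi\cap\ker A_\eta)\geq n-5$, which is vacuous in the borderline case $n=5$, and even for $n\geq 6$ it produces positive relative nullity only at one point, so an extra step (e.g.\ $\nu\equiv\nu_0$ via Lemma~\ref{lem:minEq} and homogeneity) would be needed before contradicting $\nu_0=0$. Your stronger claim that $A_\xi(\ker A_\eta)$ is one-dimensional would follow from $\omega(X)A_\xi Y=\omega(Y)A_\xi X$, i.e.\ from discarding the term $A_\eta[X,Y]$, which is circular. Hence the involutivity of $\ker A_\eta$ --- on which the final assertion about homogeneous leaves rests --- is not established; you need either a finer analysis of the identity above or the external Lemma 7 of \cite{spaceFormsNoronhadeCastro} that the paper relies on. The last step (an invariant involutive distribution has homogeneous leaves) is fine once involutivity is granted.
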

\begin{proof}
Since \( A_\xi \) is invariant under the isometries of \( M^n \) and \( \rank\ A_\eta \leq 2 \), we deduce from the Gauss equation and the homogeneity of \( M^n \), given that the scalar curvature of \( M^n \) is constant, that \( \det (A_\eta |_{\text{Im}A_\eta}) \) is constant. Hence, either \( \rank\ A_\eta \leq 1 \) everywhere or \( \rank\ A_\eta = 2 \) everywhere.

In the case where \( \rank\ A_\eta \leq 1 \), the minimality of \( f \) implies \( A_\eta \equiv 0 \). Specifically, the curvature tensor \( R^\perp \) of the normal bundle \( T^\perp_fM \) is flat. Then, from the main result in \cite{DajczerReductionCod} (or Proposition 2.4 in \cite{base}), the first normal bundle \( \mathcal{N}^f_1 \coloneqq \text{span}\{\alpha(X,Y): X,Y \in TM\} \) of \( f \) is parallel, implying that \( f \) is contained in a totally geodesic hypersurface of \( \H^{n+2} \). In this scenario, it follows from \cite{TakahashiCod1Hom} that \(f\) must be extrinsically homogeneous. Further, due to its minimality, \( f \) is totally geodesic by Theorem~\ref{cor:homScalaOlmos}.

We now turn to the case where \( \rank\ A_\eta \) is constantly equal to 2. The Gauss equation shows that \( \ker A_\eta \) is invariant under \( \mathrm{Iso} (M^n) \). According to \cite[Lemma 7]{spaceFormsNoronhadeCastro}, the distribution \( \ker A_\eta \) is involutive, with its leaves being homogeneous manifolds. 
\end{proof}

To understand the behavior of the distribution \(\ker A_\eta\) when it is not autoparallel, we will need the following well-known result that follows from the Gauss equation.
\begin{lemma} \label{lem:SplittingProduct}
Let $f: M_1^{n_1} \times M_2^{n_2} \rightarrow \Q_c^{m}$ be a minimal isometric immersion, where $c \leq 0$ and $n_1, n_2 \geq 2$. Then $c = 0$ and $f$ is an extrinsic product of minimal isometric immersions.
\end{lemma}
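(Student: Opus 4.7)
The plan is to exploit the fact that in a Riemannian product $M_1\times M_2$ every ``mixed'' plane (spanned by one vector tangent to $M_1$ and one tangent to $M_2$) has zero sectional curvature, and to feed this into the Gauss equation together with the minimality condition.

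First I would pick a point $x=(x_1,x_2)\in M_1\times M_2$ and orthonormal bases $\{e_1,\dots,e_{n_1}\}$ of $T_{x_1}M_1$ and $\{f_1,\dots,f_{n_2}\}$ of $T_{x_2}M_2$. For any indices $i,j$ the plane spanned by $e_i,f_j$ has zero intrinsic sectional curvature, so the Gauss equation reads
\begin{equation*}
0 \;=\; c \;+\; \langle \alpha(e_i,e_i),\alpha(f_j,f_j)\rangle \;-\; \|\alpha(e_i,f_j)\|^2.
\end{equation*}
Summing over all $i$ and $j$ and introducing the partial traces $H_1=\sum_i\alpha(e_i,e_i)$ and $H_2=\sum_j\alpha(f_j,f_j)$, the middle term becomes $\langle H_1,H_2\rangle$. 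Minimality of $f$ gives $H_1+H_2=0$, hence $\langle H_1,H_2\rangle=-\|H_1\|^2$, and the identity above becomes
\begin{equation*}
c\,n_1 n_2 \;=\; \sum_{i,j}\|\alpha(e_i,f_j)\|^2 \;+\; \|H_1\|^2 \;\geq\; 0.
\end{equation*}

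Combined with the hypothesis $c\leq 0$, this forces $c=0$, and then the right-hand side must vanish term by term. Thus $\alpha(X_1,X_2)=0$ for every $X_1\in TM_1$ and $X_2\in TM_2$, and in addition $H_1=H_2=0$, so each factor is immersed minimally (once we show splitting). The remaining step is to pass from the pointwise orthogonality of the second fundamental form on the two factors to an actual extrinsic product structure; this is precisely the setting of Moore's decomposition theorem for isometric immersions into Euclidean space, which applies because we have already reduced to $c=0$. Invoking it yields isometric immersions $f_k\co M_k\to\R^{m_k}$ with $f=f_1\times f_2$ after a suitable splitting of the ambient $\R^m$, and the vanishing of each $H_k$ shows that both factors are minimal.

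The one point that deserves care, rather than being a genuine obstacle, is the application of Moore's theorem: one needs to verify that the tangent distributions of the factors, together with the vanishing of $\alpha$ across them, satisfy the hypotheses of that result (in particular that the distributions are parallel in the induced connection on the normal bundle in the right sense). Since $M_1$ and $M_2$ are factors of a Riemannian product, both tangent distributions are parallel in $TM$, and $\alpha(X_1,X_2)=0$ together with the Codazzi equation gives the required compatibility, so Moore's theorem applies directly and delivers the claimed extrinsic product.
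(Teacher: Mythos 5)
Your argument is correct and is exactly the proof the paper has in mind: the paper states this lemma without proof, calling it a well-known consequence of the Gauss equation, and your computation (summing the Gauss equation over mixed planes, using minimality to get $c\,n_1 n_2 = \sum_{i,j}\Vert\alpha(e_i,f_j)\Vert^2 + \Vert H_1\Vert^2 \geq 0$, then invoking Moore's decomposition theorem once $c=0$ and the mixed second fundamental form vanishes) is the standard route. No gaps.
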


We are in a position to state the structural result about the distribution $\ker A_\eta$.

\begin{proposition}\label{prop:StructureKerAeta}
    Let $f: M^n \rightarrow \mathbb{H}^{n+2}$ be a minimal isometric immersion of a Riemannian homogeneous manifold. Assume that for each point $x\in M^n$, there exist orthonormal vectors $\xi, \eta \in T_f^\perp M(x)$ satisfying
    \begin{itemize}
        \item $\rank\  A_\eta = 2$, and
        \item for any isometry $i \in \mathrm{Iso} (M^n)$, the vector $\xi$ can be oriented such that $i_* \circ A_\xi = A_\xi \circ i_*$.
    \end{itemize}
    Then the following properties hold.
    \begin{enumerate}
        \item\label{it:NotTotGeo} $\ker A_\eta$ is not an autoparallel distribution.
        \item\label{it:ParallelAlong} $\xi$ and $\eta$ are parallel along $\ker A_\eta$.
        \item\label{it:Eigenvalue} span$\left\{ \nabla_X Y : X,Y \in \ker A_\eta \right\}$ is the $(n-1)$-dimensional eigenspace $E_\lambda$ of $A_\xi$ associated with a non-zero eigenvalue $\lambda$.
        \item\label{it:PerpTotGeo} The orthogonal complement $\ker {A_\eta}^{\perp}$ is an autoparallel distribution.
        \item\label{it:Spherical} $\ker A_\eta$ is a spherical distribution in $M^n$.
    \end{enumerate}
\end{proposition}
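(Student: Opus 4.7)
The plan is to establish the five items in order, each leaning on the previous.

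For item (1), if $\ker A_\eta$ were autoparallel then Theorem~\ref{thm:TotallyGeodesicProduct} would split $M^n$ as a Riemannian product $\Sigma^k \times \Q_c^{n-k}$ with $k \in \{2,3\}$. Since $n \geq 5$, both factors have dimension $\geq 2$, and Lemma~\ref{lem:SplittingProduct} would then force the hyperbolic ambient to be flat, a contradiction. For item (2), write the normal connection as $\nabla^\perp_X \xi = \tau(X)\,\eta$ and apply the Codazzi equation for $A_\eta$ to two arguments $X, Y \in \ker A_\eta$. Involutivity of $\ker A_\eta$ (from Lemma~\ref{lem:Rank2}) kills the $A_\eta[X, Y]$ term, leaving
\[
\tau(X)\, A_\xi Y = \tau(Y)\, A_\xi X \qquad \text{for all } X, Y \in \ker A_\eta.
\]
At each point, either $\tau$ vanishes on $\ker A_\eta$---which is exactly (2)---or $A_\xi|_{\ker A_\eta}$ has rank at most one. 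In the latter case, the relative nullity $\ker A_\xi \cap \ker A_\eta$ would have dimension $\geq n - 3 \geq 2$, contradicting the standing assumption of trivial relative nullity.

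For items (3)--(5), I start from the pointwise identity
\[
\mathrm{span}\{\nabla_X Y : X, Y \in \ker A_\eta\}(x) = \ker A_\eta(x) \oplus \Ima\, h(x),
\]
where $h\co \ker A_\eta \times \ker A_\eta \to \ker A_\eta^\perp$ is the (symmetric) second fundamental form of the integrable distribution $\ker A_\eta$. Since $\ker A_\eta$ is $(n-2)$-dimensional and non-autoparallel by (1), this span has dimension $n-1$ or $n$; assertion (3) demands that it has dimension $n-1$ and coincides with a single eigenspace $E_\lambda$ of $A_\xi$. Using the Codazzi equations for $A_\xi$ and $A_\eta$ with \emph{mixed} arguments (one in $\ker A_\eta$ and one in $\ker A_\eta^\perp$), together with (2) and the fact that $A_\eta$ is an isomorphism on $\ker A_\eta^\perp$ with eigenvalues $\pm\mu$ of constant modulus (Lemma~\ref{lem:Rank2}), I will show that $A_\xi$ preserves $\ker A_\eta$ and restricts there to a scalar operator $\lambda\,\mathrm{Id}$. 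Nonvanishing of $\lambda$ again follows from trivial relative nullity, for otherwise $\ker A_\eta$ would be contained in $\ker A_\xi \cap \ker A_\eta$, giving $\nu \geq n - 2 \geq 3$. Projecting the same Codazzi identities onto $\ker A_\eta^\perp$ will then collapse $\Ima\, h$ to a single line inside $E_\lambda \cap \ker A_\eta^\perp$, completing (3). Item (4) follows from the Codazzi equation for $A_\eta$ with one argument in each subbundle, which forces $(\nabla_T X)|_{\ker A_\eta^\perp} = 0$ for $T \in \ker A_\eta^\perp$ and $X \in \ker A_\eta$, i.e., autoparallelism of $\ker A_\eta^\perp$. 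Finally, (5) follows from (3) and (4): umbilicity of $h$ comes from the Codazzi--Mainardi equation for the distribution $\ker A_\eta$ together with the homogeneity of the leaves (Lemma~\ref{lem:Rank2}), and parallelism of the resulting mean curvature vector in the normal bundle of each leaf is a consequence of (4).

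The hardest step will be the algebraic content of (3), namely showing that $A_\xi$ preserves $\ker A_\eta$ and restricts to a scalar multiple of the identity. The mixed-argument Codazzi equations couple the two shape operators through the normal-connection form $\tau$, which is generically nonzero on $\ker A_\eta^\perp$, and extracting the desired scalar behavior relies on the systematic use of $\mathrm{Iso}(M^n)$-invariance of $A_\xi$ and of the orthogonal splitting $TM = \ker A_\eta \oplus \ker A_\eta^\perp$. Once this rigidity is in place, items (4) and (5) come out essentially by bookkeeping.
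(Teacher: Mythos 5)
Your items (1) and (4) follow the paper's route, and your argument for item (2) is a genuinely different (and arguably cleaner) shortcut: applying the Codazzi equation for $A_\eta$ to two sections of $\ker A_\eta$ and using involutivity does yield $\langle\nabla^\perp_X\eta,\xi\rangle A_\xi Y=\langle\nabla^\perp_Y\eta,\xi\rangle A_\xi X$, and the alternative in your dichotomy ($A_\xi|_{\ker A_\eta}$ of rank at most one) forces $\nu\geq n-3>0$, which is excluded in the setting where the proposition is applied (note that both your proof and the paper's silently use $\nu\equiv 0$, which holds by Lemma~\ref{lem:minEq} and homogeneity). This avoids the paper's comparison of the Codazzi equations of $f$ and $f\circ h$ for an isometry $h$ with $f\circ h$ not congruent to $f$. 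Likewise, the mixed Codazzi identity you invoke does immediately give $h(X,Y)=\langle A_\xi X,Y\rangle V$ for a single vector $V\in\ker A_\eta^\perp$ (this is \eqref{eq:defZs}), so the ``collapse of $\Ima\,h$ to a line'' is fine.

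The genuine gap is at the heart of item~(3): you assert, but do not prove, that $A_\xi$ preserves $\ker A_\eta$ and restricts there to $\lambda\,\mathrm{Id}$. This is the crux of the entire proposition --- it is exactly what makes $\ker A_\eta$ umbilical, given $h(X,Y)=\langle A_\xi X,Y\rangle V$ --- and it does not follow from Codazzi manipulations and $\mathrm{Iso}(M^n)$-invariance alone. Those only tell you that each leaf $N_x^{n-2}$, viewed in $\mathbb{H}^{n+2}$, has second fundamental form $\langle A_\xi X,Y\rangle\,\zeta$ for a fixed normal direction $\zeta$; to conclude that this bilinear form is a multiple of the metric, the paper reduces the codimension of $f|_{N_x}$ to one, recognizes the leaves as homogeneous hypersurfaces of $\mathbb{H}^{n-1}$, and invokes the Takahashi classification, ruling out the low-rank case via trivial relative nullity and the non-umbilical product case via Lemmas 17 and 18 of \cite{spaceFormsNoronhadeCastro}. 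Some such classification input is indispensable --- a symmetric operator on an $(n-2)$-dimensional space is not scalar for pointwise algebraic reasons --- and your proposal contains no substitute for it; you yourself flag the step as unresolved. Until it is supplied, item~(3), and with it the umbilicity claim in item~(5), is not established.
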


\begin{proof}
For item~\textit{(\ref{it:NotTotGeo})}, using Lemma~\ref{lem:Rank2}, we know that $\ker A_\eta$ is invariant under the isometries of $M^n$. Hence, if one of its leaves is totally geodesic, then all the leaves are totally geodesic. In such a case, by Theorem~\ref{thm:TotallyGeodesicProduct}, $M^n$ must be a product of either \(\Sigma^2 \times \Q^{n-2}_c \) or \( \Sigma^3 \times \mathbb Q^{n-3}_c\), where in the latter case \( c < 0 \). The item then follows from Lemma \ref{lem:SplittingProduct}.

\vspace{0.1cm}

For item~\textit{(\ref{it:ParallelAlong})}, we follow a similar approach to the one in \cite[Lemma 6]{DajczerGromollIsoCod2}. Let $h \in \mathrm{Iso} (M^n)$ such that $f \circ h$ is not congruent to $f$. This implies $A_\xi h_* = h_* A_{\xi}$ and $A_\eta h_* \neq \pm h_* A_{\eta}$. Given that the distribution $\ker A_\eta$ is both involutive and invariant under the isometries of $M^n$, it follows that $h_*(\ker A_\eta) = \ker \left( A_{\eta}h_* \right)$.

Considering the Codazzi equation for $f$, we have
$$\nabla_v A_\xi w - A_{\nabla_v^\perp\xi}w - A_\xi \nabla_v w = \nabla_w A_\xi v - A_{\nabla_w^\perp\xi}v - A_\xi \nabla_w v,$$
where $v,w \in TM$. The Codazzi equation for $f \circ h$ is
$$\nabla_{\tilde{v}} A_{\xi} \tilde{w} - A_{\nabla_{\tilde{v}}^\perp \xi }\tilde{w} - A_{\xi} \nabla_{\tilde{v}} \tilde{w} = \nabla_{\tilde{w}} A_{\xi} \tilde{v} - A_{\nabla_{\tilde{w}}^\perp \xi}\tilde{v} - A_{\xi} \nabla_{\tilde{w}} \tilde{v},$$
where $\tilde{v} = h_*v$, $\tilde{w}=h_*w$. 

Assuming \( v \in \ker A_\eta \), and after applying \( h_* \) to the second equation and subtracting it from the first, and using the relation \( h_* A_{\xi} = A_{\xi} h_* \), along with the invariance of \( \ker A_\eta \) under the isometries of \( M^n \), we obtain
\[
\langle \nabla_v^\perp \eta, \xi \rangle h_* A_\eta w = \langle \nabla_{h_*v}^\perp \eta, \xi \rangle A_{\eta} h_* w
\]
for all \( w \in TM \). Given that \( h_* A_\eta \neq \tilde{A}_{\tilde{\eta}} h_* \) and that \( \det A_\eta \) is constant (see Lemma \ref{lem:Rank2}), we deduce
\[
\langle \nabla_v^\perp \eta, \xi \rangle = 0
\]
for all \( v \in \ker A_\eta \). This concludes the proof of this item.

\vspace{0.1cm}

The proof of item~\textit{(\ref{it:Eigenvalue})} is more involved. First, we must establish the existence of an eigenvalue \(\lambda\) of \(A_\xi\) with eigenspace \(E_{\lambda}\) such that \(\ker A_\eta \subset E_{\lambda}\). To achieve this, we introduce a specific basis that generates the orthogonal complement of \(\ker A_\eta^{\perp}\). Directly from the Codazzi equation, we have
\begin{equation}\label{eq:defZs}
    \langle \nabla_Z^\perp \eta, \xi \rangle \langle A_\xi X, Y \rangle = \langle \nabla_X Y, A_\eta Z \rangle,
\end{equation}
where \(X,Y \in \ker A_\eta\) and \(Z \in \mathrm{Im}\  A_\eta\). Define the one-form \(\omega(\cdot)=\langle \nabla_{\cdot}^\perp \eta, \xi \rangle\) associated with the normal connection and let \(d = \dim \text{span}\{\nabla_X Y : X,Y \in \ker A_\eta\}^\perp\). If \(d=2\) then \(\ker A_\eta\) is an autoparallel distribution, a contradiction with item~\textit{(\ref{it:NotTotGeo})}. If \(d=0\) then \(\rank\ A_\eta \leq 1\) since \(A_\eta (\ker \omega) \subset \text{span}\{\nabla_X Y : X,Y \in \ker A_\eta\}^\perp\) by \eqref{eq:defZs}. We conclude then that the distribution \(\text{span}\{\nabla_X Y : X,Y \in \ker A_\eta\}^\perp\) is one-dimensional. Using equation \eqref{eq:defZs}, we can locally define the orthonormal vector fields \(\{Z_1,Z_2\}\) such that 
\begin{equation}\label{eq:defZ2}
    \langle \nabla_X Y, Z_2 \rangle = 0,
\end{equation}
and 
\begin{equation}\label{eq:defZ1}
    \langle \nabla_X Y, Z_1 \rangle = \langle \nabla_W^\perp \eta, \xi \rangle \langle A_\xi X, Y \rangle,
\end{equation}
for all \(X,Y \in \ker A_\eta\), where \(A_\eta W = Z_1\) for some \(W \in \ker A_\eta^\perp\). In short, \(\operatorname{span}\{Z_2\} = \text{span}\{\nabla_X Y : X,Y \in \ker A_\eta\}^\perp\) and \(Z_1\) generates its orthogonal complement in \(\ker A_\eta^\perp\). Since the vector fields \(Z_1\) and \(Z_2\) are (locally) invariant under isometries of \(M^n\), the left side of \eqref{eq:defZ1} remains invariant. Given that \(\langle A_\xi X, Y \rangle\) is also invariant, we deduce that \(\langle \nabla_W^\perp \eta, \xi \rangle\) is constant.

Using this basis we will show the existence of the eigenvalue \(\lambda\). Due to Lemma~\ref{lem:Rank2}, the distribution \(\ker A_\eta\) is integrable. Let \(N_x^{n-2}\) denote the leaf of this distribution through a point \(x \in M^n\). By considering the restriction of \(f\) to this leaf, we have the isometric immersion
\[
g \coloneqq f|_{N_x}: N_x^{n-2} \rightarrow \mathbb{H}^{n+2},
\]
and it is a straightforward calculation that the second fundamental form of \(g\) is given by 
\begin{equation}\label{eq:secondG}
    \alpha_g(X,Y) = \langle A_\xi X, Y \rangle \left( \xi + \langle\nabla_W^\perp \eta, \xi \rangle Z_1 \right).
\end{equation}
Consequently, the first normal bundle of \(g\) consists of one-dimensional fibers generated by \(\zeta = \xi + \langle\nabla_W^\perp \eta, \xi \rangle Z_1\), which has a constant norm along \(N_x^{n-2}\). Additionally, by \cite[Theorem~A]{sNullitiesReductionDajczerRodriguez} (or \cite[Proposition 2.7]{base}) along with our assumptions regarding the ranks, we have that the first normal bundle of \(g\) is parallel. This means that \(\zeta\) is parallel with respect to the normal connection of \(g\). In particular, we have 
\begin{equation}\label{eq:Z_1Perp}
    0= \langle {}^{\mathbb{H}}\nabla_X\zeta, Z_1 \rangle =  - \langle A_{\xi} X, Z_1 \rangle\ \text{for all }X \in \ker A_{\eta},
\end{equation}
where \({}^\mathbb{H}\nabla\) is the Levi-Civita connection of \(\mathbb{H}^{n+2}\), and a reduction of the codimension \(g: N_x^{n-2} \rightarrow \mathbb{H}^{n-1}\). We can then use the works \cite{TakahashiCod1Hom,TakahashiCod1Hom3}, which classify the homogeneous hypersurfaces of the hyperbolic space. The only possible cases with respect to the shape operator \(A_\zeta^{(g)}\) of \(g\) are
\begin{enumerate}[label=(\roman*)]
    \item \label{item:Case1} \(\rank\ A_{\zeta}^{(g)} \leq 1\);
    \item \label{item:Case2}\(\rank\ A_{\zeta}^{(g)} =n-2\) and we are in one of the following situations:
    \begin{itemize}
        \item \(g\) is umbilical and \(N_x^{n-2}\) is isometric to a sphere, to the hyperbolic space, or to the Euclidean space;
        \item \(g\) is not umbilical and \(N_x^{n-2}\) is isometric to the Riemannian product of a sphere and a hyperbolic space.
    \end{itemize}
\end{enumerate}
If we are in case \ref{item:Case1}, it is straightforward to verify that \(\ker A_\zeta \cap [A_\xi(\Ima A_\eta)]^\perp \subset \Delta\). It follows from \eqref{eq:Z_1Perp} that \(\dim [A_\xi(\Ima A_\eta) \cap \ker A_\eta] \leq 1\), and thus \(\nu \geq n-4\) which is a contradiction since \(n \geq 5\) and the submanifold have trivial relative nullity.
Hence, the only scenarios to consider are those in case \ref{item:Case2}. From item~\textit{(\ref{it:NotTotGeo})}, \(\ker A_\eta\) is neither an autoparallel distribution nor is \(N_x\) a totally geodesic submanifold. By combining Lemma 17 and Lemma 18 from \cite{spaceFormsNoronhadeCastro}, we deduce that \(g\) is umbilical and each leaf of the distribution \(\ker A_\eta\) is isometric to a Euclidean space. Using \eqref{eq:secondG}, we establish that \(\langle A_{\xi} X, Y \rangle = \lambda \langle X, Y \rangle\) for every \(X, Y \in \ker A_\eta\), where \(\lambda\) is a nonzero constant. From \eqref{eq:Z_1Perp} and \cite[Lemma 11]{spaceFormsNoronhadeCastro}, we conclude that \(\ker A_\eta\) is preserved by \(A_\xi\) and that \(\lambda\) is an eigenvalue of \(A_\xi\). In conclusion, \(\ker A_\eta \subset E_{\lambda}\), where \(E_\lambda\) denotes the eigenspace of \(A_\xi\) associated with the eigenvalue \(\lambda \in \R \setminus \{0\}\).

We will now show that \(\text{span}\left\{ \nabla_X Y : X,Y \in \ker A_\eta\right\} = E_\lambda\). Using the Codazzi equation in the direction \(\xi\) with the vectors \(X \in \ker A_\eta\) and \(Z \in (\ker A_\eta)^\perp\), it follows from item~\textit{(\ref{it:ParallelAlong})} that
\begin{equation}
\nabla_X A_\xi Z - A_\xi\nabla_X Z = \lambda \nabla_Z X - A_\xi \nabla_Z X.
\end{equation}
Taking the inner product with \(Y \in \ker A_\eta\), we get
\begin{equation}
\left \langle \nabla_X A_\xi Z, Y \right \rangle - \lambda\left \langle \nabla_X Z, Y \right \rangle = 0,
\end{equation}
which is equivalent to
\begin{equation}
\lambda \left \langle Z, Y \right \rangle X - \left \langle A_\xi Z, \nabla_X Y \right \rangle - \lambda\left \langle \nabla_X Z, Y \right \rangle = 0.
\end{equation}
Hence, \(A_\xi \nabla_X Y = \lambda \nabla_X Y\) and item~\textit{(\ref{it:Eigenvalue})} follows. Observe that since \((\text{span}\{\nabla_X Y : X,Y \in \ker A_\eta\})^\perp = \text{span}\{Z_2\}\), it follows from the definition of the orthonormal frame \(\{Z_1,Z_2\}\) that \(Z_1 \in E_\lambda\) and from the minimality of \(f\) we have \(A_\xi Z_2 = -(n-1)\lambda Z_2\).

\vspace{0.1cm}

In order to prove item~\textit{(\ref{it:PerpTotGeo})}, we need to show that $\nabla_{Z_i} Z_j \in \text{span}\{Z_1,Z_2\}$ for all $1 \leq i,j \leq 2$. The Codazzi equation for the shape operator $A_\xi$ in the directions $Z_2$ and $X \in \ker A_\eta$ gives
\[
\nabla_{Z_2} A_\xi X - A_{\nabla^{\perp}_{Z_2}\xi} X - A_\xi \nabla_{Z_2} X = \nabla_X A_\xi Z_2 - A_{\nabla^{\perp}_X\xi} Z_2 - A_\xi \nabla_X Z_2.
\]
Using that $\ker A_\eta \subset E_\lambda$ and $A_\xi Z_2 = -(n-1)\lambda Z_2$, the previous equation reduces to
\[
\lambda \nabla_{Z_2} X - A_\xi \nabla_{Z_2}X = -n \lambda \nabla_X Z_2.
\]
Taking the inner product with $Z_2$, we have the following $$\left \langle \nabla_{Z_2} Z_2, X \right \rangle = 0.$$ Additionally, taking the inner product with $Z_1$, we obtain $\left \langle \nabla_{X} Z_2, Z_1 \right \rangle = \left \langle \nabla_{X} Z_1, Z_2 \right \rangle = 0$. Analogously, the Codazzi equation for the shape operator $A_\xi$ in the directions $Z_2$ and $X \in \ker A_\eta$ reduces to $A_\xi \nabla_{Z_1} X = \lambda \nabla_{Z_1} X$. In particular, $\nabla_{Z_1} X \in \text{span}\{{Z_2}\}^{\perp} = \text{span}\{\nabla_X Y: X,Y \in \ker A_\eta\}$, which implies \[\left \langle \nabla_{X} Z_1, Z_2 \right \rangle = 0\] for all $X \in \ker A_\eta$.

The Codazzi equation for the shape operator \(A_\eta\) in directions \(X \in \ker A_\eta\) and \(Z \in \ker A_\eta^{\perp}\) gives
\[
\nabla_X A_\eta Z - A_\eta \nabla_X Z = - A_{\nabla_Z^\perp \eta} X - A_\eta \nabla_Z X.\]
Taking the inner product with \(\tilde{W} \in \Ima\  A_\eta\) and using the property that \(A_\xi\) preserves \(\ker A_\eta\), we obtain
\[\left \langle \nabla_X A_\eta Z, \tilde{W}\right \rangle - \left \langle \nabla_X Z, A_\eta \tilde{W}\right \rangle = - \left \langle \nabla_Z X, A_\eta \tilde{W}\right \rangle.\]
For eigenvectors \(\tilde{Z}_1, \tilde{Z}_2\) of \(A_\eta\) with eigenvalues \(\delta_1, \delta_2\), when \(Z = \tilde{W} = \Tilde{Z}_i\) for \(i \in \{1,2\}\) we have the following relation
\begin{equation}\label{eq:eigenvalueeta}
X(\delta_i) = -\delta_i \left \langle \nabla_{\tilde{Z}_i} X, \tilde{Z}_i\right \rangle.
\end{equation}
Using the minimality of \(f\), it follows that \(\delta = \delta_1 = -\delta_2\). Given the rank hypothesis on \(A_\eta\), we have \(\delta_1 \neq 0\). Due to the homogeneity of \(M^n\), and the fact that \(\det A_\eta\) is constant, we conclude
\[X(\delta_i) = 0 \implies \left \langle \nabla_{\tilde{Z}_i} X, \tilde{Z}_i\right \rangle = 0, \quad \forall i \in \{1,2\}.\]
Considering the specific vectors \(Z = \tilde{Z}_1\) and \(W = \tilde{Z}_2\), the Codazzi equation yields
\begin{align*}
2\delta\left \langle \nabla_X \tilde{Z}_1, \tilde{Z}_2\right \rangle &= \delta\left \langle \nabla_{\tilde{Z}_1} X, \tilde{Z}_2\right \rangle, \\
2\delta\left \langle \nabla_X \tilde{Z}_2, \tilde{Z}_1\right \rangle &= \delta\left \langle \nabla_{\tilde{Z}_2} X, \tilde{Z}_1\right \rangle.
\end{align*}
From these, we deduce the following symmetric relation
\[\left \langle X, \nabla_{\tilde{Z}_2} \tilde{Z}_1\right \rangle = -\left \langle X, \nabla_{\tilde{Z}_1} \tilde{Z}_2\right \rangle
\]
for all $X \in \ker A_{\xi}$. Further, representing \(Z_1, Z_2\) as a linear combination of \(\tilde{Z}_1, \tilde{Z}_2\), namely:
\begin{align*}
Z_1 &= \sin\theta \tilde{Z}_1 + \cos\theta\tilde{Z}_2, \\
Z_2 &= \cos\theta \tilde{Z}_1 - \sin\theta\tilde{Z}_2,
\end{align*}
for some angle function \(\theta \in \left(0, \frac{\pi}{2}\right)\). It follows from $$\left \langle \nabla_{Z_2} X , Z_2\right \rangle=\left \langle \nabla_{\tilde{Z}_1} X , \tilde{Z}_1\right \rangle = \left \langle \nabla_{\tilde{Z}_2} X , \tilde{Z}_2\right \rangle=0$$ for all $X \in \ker A_\eta$, that $$0=\left \langle \nabla_{Z_2} Z_2 , X\right \rangle = -\cos\theta\sin\theta \left \langle \nabla_{\tilde{Z}_1} \tilde{Z}_2 + \nabla_{\tilde{Z}_2} \tilde{Z}_1 , X\right \rangle= - \left \langle \nabla_{Z_1} Z_1 , X\right \rangle.$$ It remains to calculate $\left \langle \nabla_{Z_2} Z_1 , X\right \rangle$. It follows from $$0=\left \langle \nabla_{Z_1} Z_2 , X\right \rangle = \left \langle -\sin^2\theta \nabla_{\tilde{Z}_1} \tilde{Z}_2 +\cos^2\theta \nabla_{\tilde{Z}_2} \tilde{Z}_1, X\right \rangle = \left \langle \nabla_{\tilde{Z}_2} \tilde{Z}_1, X\right \rangle$$ that $\left \langle \nabla_{Z_2} Z_1 , X\right \rangle = 0$ and this concludes this item.

\vspace{0.1cm}
For item~\textit{(\ref{it:Spherical})}, we will prove that the distribution \(\ker A_\eta\) is spherical in \(M^n\) with mean curvature vector field \(\lambda\left \langle \nabla^\perp_W \eta, \xi \right \rangle Z_1\). We already know that the distribution \(\ker A_\eta\) is umbilical with this mean vector field, as shown by \eqref{eq:defZ1}. Using that $\lambda\left \langle \nabla^\perp_W \eta, \xi \right \rangle$ is constant, $Z_1$ is an unitary vector field, and $\left \langle \nabla_X Z_1 , Z_2 \right \rangle = 0$ from the proof of item~\textit{(\ref{it:Eigenvalue})} we have that $$\left(\nabla_X\left(\lambda\left \langle \nabla^\perp_W \eta, \xi \right \rangle Z_1\right) \right)\vert_{\ker A_\eta^\perp}=0,$$ which concludes the proof of the proposition.
\end{proof}

\begin{remark}\label{rmk:Z_1}
     As established in Proposition \ref{prop:StructureKerAeta}, the basis \(\{Z_1,Z_2\}\) satisfies the following conditions for all \(X \in \ker A_\eta\):
    \[
    \left \langle \nabla_X Z_1, Z_2 \right \rangle = \left \langle \nabla_{Z_1} Z_1, X \right \rangle = \left \langle \nabla_{Z_2} Z_2, X \right \rangle = \left \langle \nabla_{Z_1} Z_2, X \right \rangle = \left \langle \nabla_{Z_2} Z_1, X \right \rangle = 0.
    \]
    When applying the Codazzi equation in the direction of \(\xi\) for the vectors \(Z_1\) and \(Z_2\), and subsequently taking the inner product with \(Z_1\) and using that $\ker A_\eta^\perp$ is an autoparallel distribution, we obtain
    \[
    \lambda \left \langle \nabla_{Z_1} Z_2, Z_1 \right \rangle =  \left \langle A_\eta (\left\langle \nabla^\perp_{Z_2} \eta, \xi \right\rangle Z_1 - \left\langle \nabla^\perp_{Z_1} \eta, \xi \right\rangle Z_2), Z_1 \right \rangle.
    \]
    It follows from \eqref{eq:defZs} that $\left \langle A_\eta (\left\langle \nabla^\perp_{Z_2} \eta, \xi \right\rangle Z_1 - \left\langle \nabla^\perp_{Z_1} \eta, \xi \right\rangle Z_2), Z_1 \right \rangle=0$, from which it can be deduced that
    \[
    \nabla_{Z_1} Z_1 = 0.
    \]
\end{remark}

\subsection{Proof of the Main Theorem} Assume that $M^n$ is simply connected. If not, we consider the isometric immersion induced by the universal cover of $M^n$, ensuring that the lift of $f$ remains an isometric immersion. If $\nu_0 \neq 0$ then $f$ is totally geodesic by Proposition~\ref{prop:scalBound}. From now on, we assume $\nu_0 = 0$. By Theorem~\ref{thm:HypRank2} either $f$ is rigid, which implies that $f$ is totally geodesic by Theorem~\ref{cor:homScalaOlmos}, or for every point $x\in M^n$ there exist orthonormal vectors $\xi, \eta \in T_f^\perp M^n (x)$ such that $\rank\ A_\eta \leq 2$ and $i_* \circ A_\xi = A_\xi \circ i_*$ for every $i \in \mathrm{Iso} (M^n)$. Then, by Lemma \ref{lem:Rank2}, either $f$ is totally geodesic or $\rank\ A_\eta =2$ everywhere. Henceforth, we will assume that we are in the last case. 

Proposition \ref{prop:StructureKerAeta} implies that the distribution $\ker A_\eta$ is spherical in $M^n$, that $\ker A_\eta^\perp$ is an autoparallel distribution and the shape operators of $f$ are given by

\begin{equation}\label{eq:ShapeOperators}
 A_\xi = 
\left( \begin{array}{@{}c|c@{}}
   \begin{matrix}
    \lambda & 0 \\
    0 & -(n-1)\lambda
   \end{matrix}
      & 0 \\
   \cline{1-2}
   0 & \lambda I \\
\end{array} \right), \quad A_\eta = \left( \begin{array}{@{}c|c@{}}
   \begin{matrix}
      a & b \\
      b & -a 
   \end{matrix}
      & 0 \\
   \cline{1-2}
   0 & 0 \\
\end{array} \right),
\end{equation}

\noindent in the basis $\{Z_1,Z_2,X_1,\dots,X_{n-2}\}$ that diagonalizes $A_\xi$, where $a,b \in C^\infty(M)$ are constant along \(\ker A_\eta\). This basis is specified in item \textit{(\ref{it:Eigenvalue})} of Proposition \ref{prop:StructureKerAeta}. By Lemma \ref{lem:Rank2}, $\det A_\eta|_{\ker A_\eta^\perp}$ is constant. We can assume that the functions $a$ and $b$ are not constant; otherwise, $f$ would be isometrically rigid. Indeed, it follows that any composition of type $g = f \circ i$ for $i \in \mathrm{Iso} (M^n)$ will have the same second fundamental form as $f$ (globally) and, according to \cite{nomizuUniqNormalConn} (see Proposition 4.17 in \cite{base}), would have the same normal connection, implying that $f$ and $g$ are congruent, that is, the original submanifold is extrinsically homogeneous.

The objective now is to show that all this information, along with the fundamental equations of isometric immersions and the compatibility equations of the Levi-Civita connection, contradicts the non-constancy of the functions $a$ and $b$. For this purpose, remember that we have an orthonormal basis $\{Z_1,Z_2\}$ whose span generates an autoparallel distribution such that $\omega = \left \langle \nabla_{Z_2} Z_1, Z_2 \right \rangle$ is constant and $\nabla_{Z_1}Z_1 = 0$ (see Remark \ref{rmk:Z_1}), and we have the two operators $A_\xi, A_\eta$ given by \eqref{eq:ShapeOperators} in this basis. To prove our claim, we will use information from the Gauss and Codazzi equations.
Since $\det A_\eta|_{\ker A_\eta^\perp}$ is constant, we deduce
\[
a^2 + b^2 \equiv r^2,
\]
for some positive constant \( r \in \mathbb{R} \). This leads to the representation of the functions \( a \) and \( b \) in terms of a function \( \theta \), which is constant along \(\ker A_{\eta}\), and the nonzero constant \( r \):

\[
a = r \cos \theta, \quad b = r \sin \theta.
\]

The Codazzi equation in the direction of $\xi$, evaluated on the vectors $Z_1$ and $Z_2$, yields
\[
-\left \langle \nabla^\perp_{Z_1} \xi, \eta \right \rangle A_\eta Z_2 = \lambda \nabla_{Z_2}Z_1 - \left \langle \nabla^\perp_{Z_2} \xi, \eta \right \rangle A_\eta Z_1 + (n-1)\lambda\left \langle \nabla_{Z_2} Z_1, Z_2 \right \rangle Z_2.
\]
Using that $\nabla_{Z_1} Z_1 = 0$ implies $\nabla_{Z_1} Z_2 = 0$. This can be simplified to two equations related to the coefficients in the $Z_1$ direction,
\[
g_2 \cos \theta - g_1 \sin \theta = 0,
\]
and the coefficients in the $Z_2$ direction,
\begin{equation}\label{eq:gConst}
    r(g_1 \cos \theta + g_2 \sin \theta) = n\lambda \omega,
\end{equation}
where $g_1 = \left \langle \nabla^\perp_{Z_1} \xi, \eta \right \rangle$ and $g_2 = \left \langle \nabla^\perp_{Z_2} \xi, \eta \right \rangle$. From the first equation we conclude that
\[
g_1 = g \cos \theta, \quad g_2 = g \sin \theta
\]
for some function $g$. From \eqref{eq:gConst}, it follows that
\[
rg = n\lambda \omega,
\]
which implies $g$ is a constant.

The Codazzi equation in the direction of $\eta$, evaluated on the vectors $Z_1$ and $Z_2$, gives
\[
\nabla_{Z_1}A_\eta Z_2 - g_1(n-1)\lambda Z_2 = \nabla_{Z_2} A_\eta Z_1 + g_2 \lambda Z_1 -\omega A_\eta Z_2,
\]
which is equivalent to the two equations
\begin{equation}\label{eq:functionh1}
    \left(r Z_1(\theta) \right)\cos \theta - \left(-r Z_2(\theta)+g\lambda -2\omega r\right)\sin \theta= 0
\end{equation}
and
\begin{equation}\label{eq:functionh2}
    \left(r Z_1(\theta)\right) \sin \theta + \left(-\lambda g (n-1) -2\omega r -r Z_2(\theta)\right) \cos \theta= 0.
\end{equation}
From \eqref{eq:functionh1}, we find
\begin{equation}\label{eq:Systemh}
    rZ_1(\theta) = h \sin \theta, \quad  -r Z_2(\theta)+g\lambda -2\omega r = h \cos \theta
\end{equation}
for some function $h$, which is constant along \(\ker A_{\eta}\). Using \eqref{eq:functionh2}, we deduce
\[
h = \lambda g n \cos \theta.
\]
Therefore, \eqref{eq:Systemh} reduces to

\begin{equation}
    \begin{cases}
     Z_1 (\theta) = C \cos \theta \sin \theta\\
     Z_2 (\theta) = \frac{C}{n} -2\omega - C \cos^2\theta
    \end{cases}\,,
\end{equation} where $C \coloneqq \frac{1}{r} gn\lambda \neq 0$. It is straightforward to see that \[[Z_1,Z_2] = -\omega Z_2.\] Using the compatibility of the Levi-Civita connection we have

\begin{align*}
& Z_1(Z_2(\theta)) - Z_2(Z_1(\theta)) - [Z_1,Z_2](\theta) = 0 \\
\Leftrightarrow & \quad C^2\sin^2\theta \cos^2\theta + \frac{C^2}{n}\sin^2\theta - 2C\omega \sin^2\theta - \frac{C^2}{n}\cos^2\theta + C\omega \cos^2\theta + \frac{\omega C}{n} \\
& \quad - 2\omega^2 + C^2\cos^4\theta = 0 \\
\Leftrightarrow & \quad \left(C^2 - \frac{2C^2}{n} + 3C\omega \right) \cos^2\theta + \frac{C^2}{n} - 2C\omega + \frac{C\omega}{n} - 2\omega^2 = 0.
\end{align*}

Since $\theta$ cannot be constant, we have the following system:

\begin{equation*}
    \begin{cases}
     C(\frac{n-2}{n} C + 3\omega) = 0,\\
     \frac{C^2}{n} + \frac{1-2n}{n} C\omega-2\omega^2 =0.
    \end{cases}
\end{equation*} Substituting $\omega = \frac{2-n}{3n}C$ into the second equation of the system leads to 

\begin{equation*}
1 + \frac{(1-2n)(2-n)}{3n} -2 \frac{4-4n+n^2}{9n} = 0 
\quad \Leftrightarrow  \quad 2n^2+n-1 = 0.
\end{equation*} Therefore, as such equation cannot be satisfied for any $n \geq 5$, the contradiction arises and it follows that the function $\theta$ is constant, meaning the functions $a$ and $b$ must be constant, thereby concluding the proof of the theorem. $\hfill\blacksquare$

\section*{Acknowledgements}
F. Guimar\~aes is supported by the Para\'iba State Research Support Foundation (FAPESQ/PB) and partially by the Brazilian National Council for Scientific and Technological Development (CNPq), grant 409513/2023-7. F. Guimarães also acknowledges the time spent at the Geometry Section of KU Leuven, which was supported by the Research Foundation-Flanders (FWO) and the Fonds de la Recherche Scientifique (FNRS) under EOS Project G0H4518N. J. Van der Veken is supported by the Research Foundation - Flanders (FWO) and the Fonds de la Recherche Scientifique (FNRS) under EOS project G0I2222N and by the KU Leuven Research Fund under project 3E210539.

\bibliographystyle{abbrv}
\bibliography{bibliography}

\vskip 0.2cm

\noindent Felippe Guimarães

\noindent Departamento de Matemática, \\
Universidade Federal da Paraíba, \\
Cidade Universitária, s/n - Castelo Branco, \\
João Pessoa, PB, 58051-900, Brazil

\noindent e-mail: {\tt fsg@academico.ufpb.br}

\vskip 0.2cm 

\noindent Joeri Van der Veken

\noindent Department of Mathematics, \\
KU Leuven, \\
Celestijnenlaan 200B - Box 2400, \\
3001 Leuven, Belgium 

\noindent e-mail: {\tt joeri.vanderveken@kuleuven.be}

\end{document}